\newtheorem{thrm}{Theorem}[section]
\newtheorem{lemma}[thrm]{Lemma}
\newtheorem{prop}[thrm]{Proposition}
\newtheorem{cor}[thrm]{Corollary}
\theoremstyle{definition}
\newtheorem{defn}[thrm]{Definition}
\theoremstyle{remark}
\numberwithin{equation}{section}
\newcommand{\dbar}{$\bar{\partial}$}
\newcommand{\mdbar}{\bar{\partial}}
\newcommand{\lre}{\mathcal{E}}
\newcommand{\lra}{\mathcal{A}}
\newcommand{\lrs}{\mathcal{S}}
\newcommand{\lrl}{\mathcal{L}}
\newcommand{\lrt}{\mathcal{T}}
\newcommand{\lrp}{\mathcal{P}}
\newcommand{\Z}{\mathcal{Z}}
\begin{document}

\title[The Bergman projection and estimates]
{The Bergman projection and weighted $C^k$ estimates
 for the canonical solution to \dbar\ on non-smooth domains}


\author{Dariush Ehsani}
\address{Department of Mathematics, Penn State - Lehigh Valley, Fogelsville,
PA 18051}
 \curraddr{Humboldt-Universit\"{a}t, Institut f\"{u}r Mathematik,
10099 Berlin}
 \email{ehsani@psu.edu}
\thanks{Partially supported by the Alexander von Humboldt Stiftung}


\subjclass[2000]{Primary 32A25, 32W05}

\date{}

\dedicatory{}

\begin{abstract}
 We apply integral representations for
functions on non-smooth strictly pseudoconvex domains, the
Henkin-Leiterer domains, to derive weighted $C^k$ estimates for
the component of a given function, $f$, which is orthogonal to
holomorphic functions in terms of $C^k$ norms of $\mdbar f$. The
weights are powers of the gradient of the defining function of the
domain.
\end{abstract}

\maketitle

\section{Introduction}

Let $X$ be an $n$-dimensional complex manifold, equipped with a
Hermitian metric, and $D\subset\subset X$ a strictly pseudoconvex
domain with defining function $r$.  We allow for singularities in
the boundary, $\partial D$ of $D$ by permitting the possibility
that $dr$ vanishes at points on $\partial D$.  Such domains were
first studied by Henkin and Leiterer in \cite{HL}, and we
therefore refer to them as Henkin-Leiterer domains.

  We shall make the additional assumtion that $r$ is a Morse
function.  Let $U$ be a neighborhood of $\partial D$.  Then
\begin{equation*}
U\cap D=\{ x\in U: r(x)<0\},
\end{equation*}
 $r$ with only non-degenerate critical points on $U$.  We have
\begin{equation*}
\partial D=\{ x:r(x)=0\},
\end{equation*}
and we can assume that there are finitely many critical points on
$bD$, and none on $U\setminus bD$.

In \cite{EhLi}, Lieb and the author studied the Bergman projection
on Henkin-Leiterer domains in $\mathbb{C}^n$, and obtained
weighted $L^p$ estimates.
 Set $\gamma(\zeta)=|\partial r(\zeta)|$, and let us define the
 weighted $L^p$ spaces by
 \begin{equation*}
 L^{p,k}(D)=\left\{ f: \|f\|_{p,k}^p=\int_D\gamma^{k}
  |f|^p dV <\infty \right\}.
\end{equation*}
\begin{thrm} [Ehsani-Lieb]
\label{main intro}
 Let $D\subset\subset\mathbb{C}^n$ be a Henkin-Leiterer domain
 with a Morse defining function.
The Bergman projection is continuous from $L^p(D)$ into
$L^{p,k}(D)$ for $2\le p<\infty$ and $k=k(p)$ sufficiently large.
$k$ depends on $p$ (with $k=0$ for $p=2$).
\end{thrm}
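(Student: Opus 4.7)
For $p = 2$ the statement is trivial with $k = 0$, since the Bergman projection $B$ is the orthogonal projection onto $L^2(D) \cap \mathcal{O}(D)$. The real content lies in the case $p > 2$, and my plan is to realize $B$ as an explicit integral operator and then control the integrand by negative powers of $\gamma$ near the critical boundary points, which the weight $\gamma^{k}$ on the target side will absorb.

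The first step is to construct a Henkin-type reproducing kernel $H(z,\zeta)$ for holomorphic functions on $D$ via a Cauchy-Fantappi\`e construction built from the Levi polynomial of the Morse defining function $r$. This yields a bounded projection $P : L^p(D) \to \mathcal{O}(D)$, up to acceptable error terms. A Kerzman-Stein style comparison then writes $B = P + (B - P)$, where $B - P$ is given by a better-behaved integral kernel obtained from $P - P^*$ via the reproducing property of $B$. It therefore suffices to prove the weighted $L^p \to L^{p,k}$ estimate for $P$ and for the smoother operator $B - P$.

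The core of the argument is a pointwise bound on $H(z,\zeta)$ near the finitely many Morse critical points $p_j \in \partial D$. Using the Morse normal form of $r$ in local coordinates centered at $p_j$, one writes the denominator $\langle \partial r(\zeta), z - \zeta \rangle$ explicitly, sees that its size behaves like $\gamma(\zeta)|z-\zeta|$ plus boundary-distance terms, and extracts the standard strictly pseudoconvex singularity together with extra negative powers of $\gamma(\zeta)$. A Schur test with a test function of the form $\gamma^{a}(\zeta)\,d(\zeta,\partial D)^{b}$ then yields weighted $L^p$ boundedness once $k$ is taken large enough to compensate these negative powers, and $k(2) = 0$ because at $L^2$ no compensation is needed (the orthogonality of $B$ is already available). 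Away from the critical set the classical Phong-Stein/McNeal-type estimates apply unchanged, so only a neighborhood of each $p_j$ requires new analysis.

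The main obstacle will be the precise kernel estimate near the critical points, where the degeneracy of $dr$ interacts with the anisotropic geometry of strictly pseudoconvex boundaries; one must check by explicit computation in Morse coordinates that the singular factors are exactly of order $\gamma^{-m(p)}$ for some finite, computable exponent $m(p)$. The bookkeeping of matching Schur inequalities on both the domain and range variables simultaneously, while tracking the $p$-dependence of $k$, is the delicate point.
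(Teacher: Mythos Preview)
Your approach diverges from the paper's and contains a real gap. The paper (Sections~5--6, culminating in Theorem~\ref{bergmanlp}) does not bound a reproducing kernel directly by a Schur test; instead it writes an integral representation for $f-Bf$, replaces $\mathbf{P}_0$ by $\mathbf{P}_0^\ast$ via Proposition~\ref{cancel0}, invokes $B=I-\bar\partial^\ast N\bar\partial$, and then multiplies the whole identity by $R_k$, a $k$-fold product of first derivatives of $r$ (so $|R_k|\lesssim\gamma^k$). The key mechanism is that the relevant kernels are of \emph{smooth type}~$1$ but only \emph{type}~$0$: they already carry $R_1$ or $R_2$ factors in the numerator, coming from the $\partial r$ in $\alpha=\partial r/\phi$, and the external $R_1$ upgrades them to genuine type-$1$ operators. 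The resulting $Z$-operator compositions map $L^2\to L^p$, and the estimate $\|R_k\, Bf\|_{L^p}\lesssim\|f\|_{L^p}$ then closes using nothing about $B$ beyond $\|Bf\|_{L^2}\le\|f\|_{L^2}$.

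Your picture of the kernel acquiring ``extra negative powers of $\gamma(\zeta)$'' near the critical points is backwards: the denominator $\phi$ satisfies $|\phi|\gtrsim -r(x)-r(y)+\rho^2$ with no $\gamma$-degeneration, while the numerator carries \emph{positive} $\gamma$-like factors. A Schur test with weight $\gamma^a|r|^b$ will therefore not detect the improvement you need. More seriously, your Kerzman--Stein step does not close: by Proposition~\ref{cancel0} one has $\mathbf{P}_0-\mathbf{P}_0^\ast=A_{(0,1)}$, which in this non-smooth setting is only of type~$0$ (in the smooth case it would be type~$1$), so iterating $B=P-B(P-P^\ast)$ gains no integrability and the bootstrap is circular. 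The paper's substitute is Corollary~\ref{dbarp}, which exploits the holomorphicity of $\phi(y,x)$ in $x$ near the diagonal together with $B=I-\bar\partial^\ast N\bar\partial$ to produce a genuine type-$1$ gain, but only \emph{after} multiplication by $R_1$; this is exactly why the weight $\gamma^k$ shows up on the target side rather than being an artifact of a Schur estimate.
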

Our purpose in this paper is to extend the results of \cite{EhLi}.
In one direction we look to extend the results into the setting of
Henkin-Leiterer domains in complex manifolds.  The integral
representation used in \cite{EhLi} relied on integral operators
constructed with the use of the Henkin-Ram\'{i}rez function, and
is not readily adaptable to the manifold setting.  In this paper
we use integral operators constructed from the Levi polynomial and
the geometric distance function as in \cite{LR86}.  One of our
main results is the reproduction of the weighted $L^p$ estimates
of \cite{EhLi} when $D$ lies in a complex manifold.

We further establish $C^k$ estimates in the theorem
\begin{thrm}
\label{ckintro}
 Let $D\subset\subset X$ be a Henkin-Leiterer
domain with a Morse defining function.  Let $f\in L^2(D)$ and $B$
denote the Bergman projection. Then for $\epsilon>0$, we have
\begin{equation*}
\|\gamma^{(n+2)+\epsilon+k} (f-Bf)\|_{C^{k}}\lesssim
\|\gamma^k\mdbar f\|_{C^k} +\|f\|_{L^2}.
\end{equation*}
\end{thrm}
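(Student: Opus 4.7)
The plan is to reduce the weighted $C^k$ estimate on $f-Bf$ to two independent bounds: a direct kernel estimate for a $\mdbar$--solving integral operator $T$ built from the Levi polynomial, and a weighted $C^k$ estimate for the Bergman projection of a specific $\mdbar$--solution. Using a Lieb--Range style integral representation of the sort indicated in the discussion preceding Theorem~\ref{ckintro}, one obtains a decomposition
\begin{equation*}
 f = Hf + T(\mdbar f),
\end{equation*}
where $Hf$ is holomorphic on $D$. Since $f - T(\mdbar f) = Hf$ is reproduced by $B$, this yields
\begin{equation*}
 f - Bf \;=\; T(\mdbar f) \;-\; B\bigl(T(\mdbar f)\bigr),
\end{equation*}
so it suffices to estimate each of the two terms on the right in the weighted $C^k$ norm.

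For the first term, I would carry out a direct kernel analysis of $T$. Away from the critical set of $r$, the kernel is the classical Lieb--Range kernel of \cite{LR86}, which maps $C^k \to C^k$ by standard arguments. Near the critical points of $r$, the Levi polynomial loses ellipticity and the support function acquires singularities controlled by inverse powers of $\gamma$; propagating these through the Lieb--Range estimates, adapted to the Henkin--Leiterer setting, should yield a bound of the form
\begin{equation*}
 \|\gamma^{\alpha} Tg\|_{C^k} \;\lesssim\; \|\gamma^k g\|_{C^k}
\end{equation*}
for an exponent $\alpha$ no larger than $(n+2)+\epsilon+k$. The $(n+2)$ reflects the volume accumulated when integrating against a kernel with an $(n+1)$--real--dimensional singularity sheet, with $\epsilon$ absorbing the endpoint.

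The second term is the main technical obstacle, since $B$ is defined only by $L^2$ orthogonal projection and no explicit kernel is available on a Henkin--Leiterer domain. The strategy is indirect: first use $L^2 \to L^2$ continuity of $T$ (from duality against its formal adjoint acting on $\mdbar^*$--closed forms) to conclude $\|T(\mdbar f)\|_{L^2} \lesssim \|f\|_{L^2}$; hence $B(T(\mdbar f))$ is a holomorphic $L^2$ function with comparable norm. Theorem~\ref{main intro} then upgrades this to a weighted $L^p$ bound for $p$ and the weight exponent sufficiently large. Since $BT(\mdbar f)$ is holomorphic, interior Cauchy estimates on admissible polydiscs whose radii shrink like appropriate powers of $\gamma$ and $|r|$ convert the weighted $L^p$ bound into a pointwise weighted $C^k$ bound, with additional powers of $\gamma$ appearing from the shrinking polydiscs near the critical points of $r$ on $\partial D$. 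Balancing these exponents against those produced by the $T$ estimate, and optimising in $p$, should yield the prefactor $\gamma^{(n+2)+\epsilon+k}$.

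Combining the two estimates gives the theorem, with $\|f\|_{L^2}$ entering through the Bergman step. The principal difficulty I anticipate is the bookkeeping of powers of $\gamma$ as they propagate through the composition $B \circ T$, and in particular the anisotropic scaling of the admissible polydiscs used to turn the weighted $L^p$ regularity of the holomorphic function $BT(\mdbar f)$ into uniform weighted $C^k$ estimates across the critical locus of $r$; this, rather than any single ingredient, is the technical heart of the argument.
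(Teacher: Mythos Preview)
Your first step contains a genuine gap. The Lieb--Range representation used in the paper (Theorem~\ref{basic0}) does \emph{not} give a decomposition $f = Hf + T(\mdbar f)$ with $Hf$ holomorphic. The term playing the role of your $Hf$ is $\mathbf{P}_0 f$, a volume integral with kernel $\lrp_0 = \ast\partial_x\overline{K}$; this reproduces holomorphic functions but does not produce them from arbitrary $f$. (On a manifold one cannot appeal to the Henkin--Ram\'irez support function to get a genuinely holomorphic Cauchy--Fantappi\`e kernel; the introduction explicitly flags this as the reason for switching to the Levi-polynomial construction.) Consequently $f - T(\mdbar f)$ is not holomorphic, $B$ does not fix it, and your identity $f - Bf = T(\mdbar f) - B(T(\mdbar f))$ fails. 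This is not a technicality: it is exactly the obstacle the paper is organized around.

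The paper's substitute for holomorphy is the symmetry argument of Section~5. One shows $\mathbf{P}_0 - \mathbf{P}_0^\ast = A_{(0,1)}$, so after applying the representation to $f-Bf$ one may replace $\mathbf{P}_0$ by $\mathbf{P}_0^\ast$. The point of the adjoint is that $\phi(y,x)$ is holomorphic in $x$ near the diagonal, so $\mdbar_x R_1\lrp_0^\ast$ is of the form $\mdbar_x\lra_1 + \lrs_\infty$; combined with $f-Bf = \mdbar^\ast N\mdbar f$ this yields a closed relation $R_1(f-Bf) = Z_1\mdbar f + \mathbf{S}_\infty N\mdbar f + Z_1(f-Bf)$. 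Iterating this $(n+2)$ times manufactures the weight $\gamma^{n+2}$ and lands in $L^\infty$; tangential derivatives are then handled inductively via the commutator relations of Theorem~\ref{commutator}, and normal derivatives are traded for tangential ones plus $\mdbar f$ via (\ref{normaldecomp}). No Cauchy estimates on $Bg$ are used, and the weighted $L^p$ theorem for $B$ is proved in parallel from the same iteration rather than used as an input to the $C^k$ result.

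Even granting a holomorphic $H$, your second step is underspecified: Theorem~\ref{main intro} controls $\int \gamma^{k}|Bg|^p$, a norm that vanishes at the critical locus, so extracting pointwise $C^k$ control there via shrinking polydiscs would itself require a careful anisotropic analysis near $\gamma = 0$ that you have not supplied. The paper's iteration avoids this entirely.
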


One of the obvious difficulties with which one comes to face in
trying to establish $C^k$ estimates on non-smooth domains is the
choice of frame of vector fields with which one will work.  In the
case of smooth domains a special boundary chart is used in which
$\omega^n=\partial r$ is part of an orthonormal frame of
$(1,0)$-forms.  When $\partial r$ is allowed to vanish, the frame
needs to be modified.  We get around this difficulty by defining a
$(1,0)$-form, $\omega^n$ by $\partial r = \gamma \omega^n$.  In
the dual frame of vector fields we are then faced with factors of
$\gamma$ in the expressions of the vector fields with respect to
local coordinates, and we deal with these terms by multiplying our
vector fields by a factor of $\gamma$.  This ensures that when
vector fields are commuted, there are no error terms which blow up
at the singularity.

The author wishes to thank Ingo Lieb by whom the problem of the
Bergman projection on Henkin-Leiterer domains was originally
suggested.  Many of ideas in the current paper were discussed with
Ingo Lieb during the author's stay at the Max Plank Institute for
mathematics in Bonn in 2005, and many helpful comments were taken
into account.

\section{Notation}

With local coordinates denoted by $\zeta_1,\ldots,\zeta_n$, we
equip $X$ with a Hermitian metric
\begin{equation*}
ds^2= \sum_{j,k}  g_{jk}(\zeta)d\zeta_j d\bar{\zeta}_k.
\end{equation*}

We take $\rho(x,y)$ to be a symmetric, smooth function on $X\times
X$ which coincides with the geodesic distance in a neighborhood of
the diagonal, $\Lambda$, and is positive outside of $\Lambda$.

 We shall write
$\lre_{j}(x,y)$, for $j\ge 0$, for those double forms on open sets
$U\subset X\times X$ such that $\lre_{j}$ is smooth on $U$ and
satisfies
\begin{equation*}
\lre_{j}(x,y)\lesssim  \rho^j(x,y).
\end{equation*}

We follow \cite{LR86} to construct kernels to be used in our
integral representation.  Let $V^i\subset \subset U^i$ be two open
coverings of the boundary $\partial D$.  $\delta$ and
$\varepsilon$ ($ \varepsilon\le \delta$) are chosen such that
\begin{align*}
&i)\ V^i \mbox{ cover the set } \{ r(x)\le 3\delta\}\\
&ii)\ x\in V^i \mbox{ and } \rho(x,y)\le \varepsilon
 \mbox{ imply } y\in U^i\\
&iii)\ |r(x)|\le\delta \mbox{ and } \rho(x,y)\le \varepsilon
 \mbox{ imply } |r(y)|< 2\delta\\
&iv)\ |r(x)|\le 2\delta \mbox{ and } \rho(x,y)\le \varepsilon
 \mbox{ imply } |r(y)|< 3\delta.
\end{align*}
 Let
$(\zeta^i,z^i)$ be local coordinates on $U^i\times U^i$.  We
define the Levi polynomial $F^i$ on $U^i\times U^i$ by
\begin{equation*}
F^i(\zeta,z)= \sum_{j=1}^n\frac{\partial
 r}{\partial\zeta_j}(\zeta)(\zeta_j-z_j)
  -\frac{1}{2}\sum_{j,k=1}^n\frac{\partial^2
  r}{\partial\zeta_j\partial\zeta_k}(\zeta_j-z_j)(\zeta_k-z_k),
\end{equation*}
where, for ease of notation, we drop the superscripts, $i$, on the
local coordinates.

We also choose a smooth partition of unity, $\xi^i$ such that
$\mbox{supp } \xi^i\subset\subset V^i$ and $\sum\xi^i=1$ on $\{
|r(x)|\le 3\delta\}$.  We set
\begin{equation*}
F(x,y)=\sum_i \xi^i(x)F^i(x,y).
\end{equation*}
We choose a smooth symmetric patching function on $X\times X$,
$\varphi$, such that $0\le \varphi(x,y)\le 1$ and
\begin{equation*}
\varphi(x,y)=
 \begin{cases}
1 & \mbox{for } \rho^2(x,y)\le
\frac{\varepsilon}{2}\\
0 &  \mbox{for } \rho^2(x,y)\ge \frac{3}{4}\varepsilon,
\end{cases}
\end{equation*}
and set
\begin{equation*}
\phi(x,y)
 =\varphi(x,y)(F(x,y)-r(x))
  +(1-\varphi(x,y))\rho^2(x,y).
 \end{equation*}

We write here the essential properties which we use in this paper.
Let $D_{\delta}=\{x: r(x)<\delta)\}$ and $S_{\delta}=\{x:
-\delta<r(x)<\delta)\}$.
\begin{lemma}
 \label{fundphi}
There exist positive constants $\epsilon_0$ and $\delta_0$ such
that on $\overline{S}_{\delta_0}\times\overline{D}_{\delta_0}$
\begin{enumerate}
\item[i)] $\phi=0   \mbox{ iff } x=y\in \partial D$
 \item[ii)]
$|\phi|\gtrsim 1 \mbox{ if } \rho(x,y)\ge\epsilon_0$
 \item[iii)] $2 \mbox{Re }\phi(x,y)\gtrsim
 -r(x)-r(y)+\rho^2(x,y)$
 \item[iv)] $\phi-\phi^{\ast}=\lre_3$.
\end{enumerate}
\end{lemma}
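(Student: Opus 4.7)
The plan is to derive all four statements from one Taylor expansion of $r$ combined with the strict plurisubharmonicity of the defining function. Expanding $r$ about $\zeta=x$ in a local chart and using that $r$ is real yields the key identity
\begin{equation*}
r(y)-r(x)+2\,\mbox{Re}\,F^{i}(x,y) = \sum_{j,k}\frac{\partial^{2}r}{\partial\zeta_{j}\partial\overline{\zeta}_{k}}(x)\,(y-x)_{j}\overline{(y-x)_{k}}+\lre_{3}(x,y).
\end{equation*}
Strict plurisubharmonicity of the Morse defining function in a tubular neighborhood of $\partial D$ provides a uniform $c>0$ with $\sum_{j,k}\partial_{j}\overline{\partial}_{k}r(x)v_{j}\overline{v}_{k}\ge c|v|^{2}$, and combining the two gives the pseudoconvexity estimate
\begin{equation*}
2\,\mbox{Re}\,F^{i}(x,y)+r(y)-r(x)\gtrsim \rho^{2}(x,y)
\end{equation*}
for $\rho$ below a threshold depending only on $c$ and the $C^{3}$-bounds on $r$.

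Multiplying by $\xi^{i}(x)$ and summing (using $\sum_{i}\xi^{i}\equiv 1$ on $\{|r|\le 3\delta\}$) promotes the inequality to $2\,\mbox{Re}(F(x,y)-r(x))+r(x)+r(y)\gtrsim \rho^{2}$ on $\mbox{supp}\,\varphi$. Property (iii) follows by combining this with the geodesic piece $2(1-\varphi)\rho^{2}$ of $\phi$: on $\mbox{supp}(1-\varphi)$ one has $\rho^{2}\ge \varepsilon/2$, so the boundary-value contribution $-r(x)-r(y)$, bounded by $2\delta_{0}$, is absorbed by the $\rho^{2}$ term once $\delta_{0}$ is chosen small relative to $\varepsilon$. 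Property (ii) is then immediate: if $\rho\ge\epsilon_{0}$ then either $\varphi\equiv 0$ (so $\phi=\rho^{2}\ge\epsilon_{0}^{2}\gtrsim 1$) or (iii) yields $2\,\mbox{Re}\,\phi\gtrsim \epsilon_{0}^{2}-2\delta_{0}\gtrsim 1$ provided $\delta_{0}\ll\epsilon_{0}^{2}$.

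For (i), the ``if'' direction is immediate from $F(x,x)=\rho(x,x)=0$ and $r(x)=0$ when $x\in\partial D$. For the converse, $\phi(x,y)=0$ together with (ii) forces $\rho(x,y)<\epsilon_{0}$; choosing $\epsilon_{0}^{2}<\varepsilon/2$ ensures $\varphi(x,y)=1$ and hence $\phi=F(x,y)-r(x)$. Taking real parts and invoking the pseudoconvexity estimate forces $r(x)+r(y)\gtrsim \rho^{2}$, and a closer inspection of the Taylor identity combined with $r(y)\le 0$ in $\overline{D}$ pins $\rho=0$ and $r(x)=0$. For (iv), interpreting $\phi^{\ast}(x,y)=\overline{\phi(y,x)}$, one computes
\begin{equation*}
\phi-\phi^{\ast}=\varphi\bigl[F(x,y)-\overline{F(y,x)}+r(y)-r(x)\bigr]
\end{equation*}
since the real symmetric piece $(1-\varphi)\rho^{2}$ cancels. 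Expanding both $F(x,y)$ and $\overline{F(y,x)}$ by Taylor about $x$ and $y$ and substituting the Step 1 identity, the holomorphic gradient and Hessian terms cancel outright, the antiholomorphic gradient pairs combine into $-\mbox{Lev}(r,y)(y-x)$ modulo $\lre_{3}$, the antiholomorphic Hessian contributions cancel modulo $\lre_{3}$, and the Step 1 remainder contributes $+\mbox{Lev}(r,x)(y-x)$. The net difference $[\mbox{Lev}(r,x)-\mbox{Lev}(r,y)](y-x)$ factors as an $\lre_{1}$ quantity (smoothness of the coefficients) times a quadratic factor, hence belongs to $\lre_{3}$.

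The main obstacle is the uniform handling of the critical points of $r$ on $\partial D$. At such points $\partial r$ vanishes, so the Levi polynomial $F^{i}$ loses its linear term in $y-x$ and becomes purely quadratic; nonetheless the pseudoconvexity estimate of Step 1 survives because it is driven by the Levi form rather than the holomorphic Hessian. The Morse assumption guarantees that these critical points are isolated in $\partial D$, so the auxiliary constants in (ii)--(iv) can still be chosen uniformly across the boundary, and the three scales $\delta_{0}\ll\epsilon_{0}^{2}\ll\varepsilon$ can be fixed once and for all.
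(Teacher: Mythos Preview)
The paper does not actually prove this lemma; it is stated as a collection of ``essential properties which we use in this paper,'' with the implicit reference to the construction in \cite{LR86}. So there is no paper proof to compare against, and your write-up is supplying an argument where the paper simply quotes the literature.

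Your approach is the standard one and is correct in outline. The Taylor identity you open with, together with strict plurisubharmonicity, is exactly how (iii) is obtained in the Lieb--Range setting; (ii) then follows from (iii) and the definition of $\varphi$ as you say. Your computation for (iv), with $\phi^{\ast}(x,y)=\overline{\phi(y,x)}$, is the right interpretation here (note that the paper uses both $\bar\phi$ and $\phi^{\ast}$ separately in the admissible-kernel definition, so $\phi^{\ast}$ cannot mean mere conjugation), and the cancellation you describe is genuine: expanding $\overline{F(y,x)}$ to second order about $x$ and subtracting from $F(x,y)+r(y)-r(x)$ leaves only $\lre_{3}$.

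One point deserves care in (i). You invoke ``$r(y)\le 0$ in $\overline{D}$'' to force $\rho=0$, but the lemma is stated on $\overline{S}_{\delta_{0}}\times\overline{D}_{\delta_{0}}$, where $r(y)$ may be positive (up to $\delta_{0}$). In fact, on the unit ball one has $\phi(x,y)=1-\bar{x}y$, which vanishes at $y=x/|x|^{2}\neq x$ whenever $|x|\neq 1$, and such pairs can lie in $\overline{S}_{\delta_{0}}\times\overline{D}_{\delta_{0}}$. So (i) as literally stated is slightly imprecise; what is actually used (and what your argument proves) is the version with $y\in\overline{D}$, or equivalently the analogous statement for $\phi_{\epsilon}$ on $\overline{D}_{\epsilon}\times\overline{D}_{\epsilon}$, where $r_{\epsilon}\le 0$ and your reasoning goes through verbatim. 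This is a wrinkle in the lemma's formulation rather than a flaw in your method.
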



Since $D$ is strictly pseudoconvex and $r$ is a Morse function, we
can take $r_{\epsilon}=r+\epsilon$ for $\epsilon$ small enough,
and then $r_{\epsilon}$ will be defining functions for smooth,
strictly pseudoconvex $D_{\epsilon}$.

We define $\phi_{\epsilon}(x,y)$ as we did $\phi(x,y)$ above with
$r$ replaced by $r_{\epsilon}$.  We also define
\begin{align*}
 \label{pdefn}
 &   P(x,y)=\rho^2(x,y)+
2r(x)r(y)\\
&P_{\epsilon}(x,y)=\rho^2(x,y)+ 2r_{\epsilon}(x)r_{\epsilon}(y).
\end{align*}

For $N\ge 0$, we let $R_N$ denote an $N$-fold product, or a sum of
such products, of first order vector fields applied to $r(y)$,
with the notation $R_0=1$.

\begin{defn} A double differential form $\lra^{\epsilon}(x,y)$ on
$\overline{D}_{\epsilon}\times\overline{D}_{\epsilon}$ is an
\textit{admissible} kernel, if it has the following properties:
\begin{enumerate}
\item[i)] $\lra^{\epsilon}$ is smooth on
$\overline{D}_{\epsilon}\times\overline{D}_{\epsilon}-\Lambda_{\epsilon}$
 \item[ii)] For each point $(x_0,x_0)\in \Lambda_{\epsilon}$ there is
 a neighborhood $U\times U$ of $(x_0,x_0)$ on which $\lra^{\epsilon}$ or $\overline{\lra}^{\epsilon}$
 has the representation
 \begin{equation}
 \label{typerep}
  R_N \lre_{j}
  P^{-t_0}_{\epsilon}\phi^{t_1}_{\epsilon}\overline{\phi}^{t_2}_{\epsilon}
   \phi^{\ast t_3}_{\epsilon}\overline{\phi}^{\ast t_4}_{\epsilon} r^l_{\epsilon} r^{\ast
   m}_{\epsilon}
 \end{equation}
with $N,j, t_0, \ldots, m$ integers and $j, t_0, l, m \ge 0$,
 $-t=t_1+\cdots+t_4\le 0$.
\end{enumerate}
The above representation is of \textit{smooth type} $s$ for
\begin{equation*}
s=2n+j+\min\{2, t-l-m\} -2(t_0+t-l-m).
\end{equation*}
We define the \textit{type} of $\lra^{\epsilon}(x,y)$ to be
\begin{equation*}
\tau=s-\max \{ 0,2-N\}.
\end{equation*}
  $\lra^{\epsilon}$ has \textit{smooth
type} $\ge s$ if at each point $(x_0,x_0)$ there is a
representation (\ref{typerep}) of smooth type $\ge s$.
$\lra^{\epsilon}$ has \textit{type} $\ge \tau$ if at each point
$(x_0,x_0)$ there is a representation (\ref{typerep}) of type $\ge
\tau$.  We shall also refer to the \textit{double type} of an
operator $(\tau,s)$ if the operator is of
 type $\tau$ and of smooth type $s$.
\end{defn}
The definition of smooth type above is taken from \cite{LR86}.
Here and below $(r_{\epsilon}(x))^{\ast }=r_{\epsilon}(y)$, the
$\ast$ having a similar meaning for other functions of one
variable.

For $\lra_j^0$, we will simply write $\lra_j$. We also denote by
$A_j^{\epsilon}$ to be operators with kernels of the form
$\lra_j^{\epsilon}$.  $A_j$ will denote the operators with kernels
$\lra_j$.  We use the notation $\lra_{(j,k)}^{\epsilon}$ (resp.
$\lra_{(j,k)}$) to denote kernels of double type $(j,k)$.

\section{Properties of operators}
We collect in this section the various mapping properties of our
operators.  The proofs follow as in \cite{Eh09a, Eh09b}.

A non-vanishing vector field, $T$, will be called tangential if
$Tr=0$ on $r=0$.  Near a boundary point, we choose a coordinate
patch on which we have an orthogonal frame $\omega^1,\ldots,
\omega^n$ of $(1,0)$-forms with $\partial r=\gamma\omega^n$.  Let
$L_1,\ldots,L_n$ denote the dual frame. $L_1,\ldots, L_{n-1}$,
$\overline{L}_1,\ldots,\overline{L}_{n-1}$, and
$Y=L_n-\overline{L}_n$ are tangential vector fields.
$N=L_n+\overline{L}_n$ is a normal vector field.  We say a given
vector field $X$ is a smooth tangential vector field if it is a
tangential field and if near each boundary point $X$ is a
combination of such vector fields $L_1,\ldots, L_{n-1}$,
$\overline{L}_1,\ldots,\overline{L}_{n-1},Y$, and $rN$ with
coefficients in $C^{\infty}(\overline{D})$.

We define the function spaces with which we will be working.
\begin{defn}  Let $0\le\beta$ and $0\le\delta$.  We define
\begin{equation*}
\|f\|_{L^{\infty,\beta,\delta}(D)}=\sup_{x\in D}
 |f(x)|\gamma^{\beta}(x)|r(x)|^{\delta}.
\end{equation*}
\end{defn}
\begin{defn}
We set for $0<\alpha<1$
\begin{equation*}
\Lambda_{\alpha}(D)
 =\{ f\in L^{\infty}(D)\ |\ \|f\|_{\Lambda_{\alpha}}
 := \|f\|_{L^{\infty}}+\sup
 \frac{|f(x)-f(y)|}{|x-y|^{\alpha}}<\infty \}.
\end{equation*}
\end{defn}
We also define the spaces $\Lambda_{\alpha,\beta}$ by
\begin{equation*}
 \Lambda_{\alpha,\beta}:=\{ f: \|f\|_{\Lambda_{\alpha,\beta}}=
\|\gamma^{\beta}f\|_{\Lambda_{\alpha}}<\infty \}.
\end{equation*}

\begin{thrm}
\label{dertype1}
 Let $T$ be a smooth first order tangential differential operator on $D$.
 For $A$ an operator of type 1 we have
\begin{align*}
&i)\ A^{}:L^{\infty,2+\epsilon,0}(D)\rightarrow
 \Lambda_{\alpha,2-\epsilon'}(D)
\qquad 0<\epsilon,\epsilon', \quad \alpha+\epsilon+\epsilon'<1/4\\
&ii)\ A_j^{}:L^p(D)\rightarrow L^s(D) \quad
\frac{1}{s}>\frac{1}{p}-\frac{j}{2n+2}\\
 &iii)\ \gamma^{\ast}T A:\L^{\infty,2+\epsilon,0}(D) \rightarrow
 L^{\infty,\epsilon',\delta}(D) \qquad 1/2<\delta<1 \quad \epsilon<\epsilon'<1\\
 &iv)\ A:L^{\infty,\epsilon,\delta}(D)\rightarrow
 L^{\infty,\epsilon',0}(D) \qquad \epsilon<\epsilon'
 \quad \delta<1/2 +(\epsilon'-\epsilon)/2.
\end{align*}
\end{thrm}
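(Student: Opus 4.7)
The plan is to reduce all four statements to pointwise kernel estimates for a type-$1$ operator and then to apply, depending on the statement, either a fractional integration argument (for (ii)) or a direct estimate against the worst-case input function (for (i), (iii), (iv)). First I would start from the explicit representation (\ref{typerep}) and use Lemma \ref{fundphi} to extract the lower bounds $|\phi|\gtrsim |r(x)|+|r(y)|+\rho^2(x,y)$ (via $2\mathrm{Re}\,\phi$) and $|P|\gtrsim \rho^2(x,y)+r(x)r(y)$, together with the vanishing of $r^l,r^{*m}$ and the action of $R_N$ along normal directions. The bookkeeping encoded in the definitions of smooth type $s$ and type $\tau$ is precisely designed so that a type-$1$ operator kernel obeys, near the diagonal, a size estimate of order $\rho^{-(2n-1)}$ after integration in the anisotropic directions.

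For part (ii), I would invoke the Hardy--Littlewood--Sobolev-type mapping property on the non-isotropic balls associated with the pseudometric $\phi$, whose effective real dimension near the boundary is $2n+2$. Using Schur's test with gauge weights $\gamma^a(y)$, or interpolating between the endpoint estimates for $\lra_j$ and its higher-type relatives, one recovers $1/s>1/p-j/(2n+2)$; this is essentially the scheme carried out in \cite{Eh09a}. For parts (i), (iii), and (iv), the argument is to plug in $|f(y)|\lesssim \|f\|_{L^{\infty,2+\epsilon,0}}\gamma^{-(2+\epsilon)}(y)$ and estimate the integral directly. The critical points of $r$ produce a non-integrable singularity of $\gamma^{-(2+\epsilon)}$; however, since the Morse condition controls the vanishing order of $\gamma$ by a power of $\rho$, the integrals $\int_D |\lra(x,y)|\gamma^{-(2+\epsilon)}(y)\,dV(y)$ converge with the predicted growth $\gamma^{-(2-\epsilon')}(x)$, which is exactly the statement of (i) (modulo its Hölder component) and (iv). For (iii), applying a smooth tangential first-order operator $T$ to $\lra$ under the integral sign produces, through $T_x\phi$, at most one additional factor of $\gamma^{-1}(x)$; but the extra $\gamma^*(y)=\gamma(y)$ supplied by the multiplier cancels this loss against the off-diagonal behavior, leaving the operator $\gamma^* T A$ morally of type $1$ once again, which then yields the stated $L^{\infty,\epsilon',\delta}$ estimate.

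For the Hölder part of (i), I would split $D$ into a neighborhood $|y-x|\le 2|x-x'|$, where the two separate kernel values are bounded individually by size estimates, contributing $|x-x'|^\alpha$ after integration against the weighted input, and a complementary region where the mean value theorem converts $|\lra(x,y)-\lra(x',y)|$ into $|x-x'|$ times an integrable kernel of one higher order of singularity. The constraint $\alpha+\epsilon+\epsilon'<1/4$ is a manifestation of the loss incurred at the Morse singularities, where the non-isotropic geometry gives the factor-of-four discount familiar from \cite{EhLi}.

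The main obstacle, I expect, is not any single estimate but the interplay between the diagonal singularity and the Morse zeros of $\gamma$ on $\partial D$. The definitions of $\lre_j$, the weighting of tangential vector fields by $\gamma$, and the precise exponents chosen in the definition of "type" are all bookkeeping devices that make the competing singularities cancel cleanly. Once the framework of \cite{Eh09a,Eh09b} is imported, each of (i)--(iv) reduces to checking that the representation (\ref{typerep}) for a type-$1$ kernel is preserved (up to admissible remainders) under the operations of differentiation in $x$, multiplication by $\gamma^*$, and integration in $y$ against the model input; the four statements then follow by matching exponents.
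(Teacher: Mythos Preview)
The paper does not actually prove this theorem: Section~3 opens with ``We collect in this section the various mapping properties of our operators. The proofs follow as in \cite{Eh09a, Eh09b},'' and Theorem~\ref{dertype1} is then stated without further argument. So there is no in-paper proof to compare your proposal against; the paper and you agree that the substance lives in \cite{Eh09a,Eh09b}.

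Your sketch is a reasonable and essentially correct outline of what those proofs contain: kernel size bounds from Lemma~\ref{fundphi} and the representation~(\ref{typerep}), a non-isotropic fractional-integration argument with effective homogeneous dimension $2n+2$ for (ii), direct weighted $L^\infty$ bounds for (i), (iii), (iv), and the standard near/far splitting for the H\"older part of (i). The one place I would tighten is your account of (iii): the point is not that $\gamma^{\ast}$ cancels a loss of $\gamma^{-1}(x)$ coming from $T_x\phi$, but rather that a tangential $T$ applied to the kernel drops the type by at most one and produces an extra $R_1$ or $\mdbar r$ factor (cf.\ the relations $T\phi=R_1\lre_0+\lre_1$, $TP=\lre_1+\lre_0 r r^{\ast}$ used later in the paper), and the multiplier $\gamma^{\ast}$ restores the lost type by contributing an additional $R_1$-like factor in the sense of the admissibility definition. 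With that adjustment your outline matches the methodology the paper is invoking.
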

\begin{thrm} Let $1< p< \infty$.  Then
 \label{a0a2}
\begin{equation*}
A_0:L^p(D)\rightarrow L^p(D)
\end{equation*}
\end{thrm}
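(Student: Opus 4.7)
The plan is to identify $A_0$ as a non-isotropic Calder\'on-Zygmund operator on $D$ and invoke the standard singular-integral machinery. The smooth-type-$0$ condition makes the kernel borderline at the diagonal, with $|\lra_0(x,y)| \lesssim |\phi(x,y)|^{-n}$ modulo terms of strictly lower smooth type, so the principal contribution is a standard CZ kernel with respect to the non-isotropic quasi-distance $d(x,y) := |\phi(x,y)|^{1/2}$.

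First I would split $\lra_0 = \varphi \lra_0 + (1-\varphi)\lra_0$ using the patching function $\varphi$ of the previous section. The off-diagonal piece $(1-\varphi)\lra_0$ is bounded in absolute value by Lemma \ref{fundphi}(ii), so its $L^p$-boundedness follows trivially from Schur's test. For the near-diagonal piece, I would first establish $L^2$-boundedness by applying Schur's test to $|\lra_0|^{1-\delta}$ for small $\delta>0$, relying on the standard integral estimate
\begin{equation*}
\int_D |\phi(x,y)|^{-n+n\delta}\,dV(y) \lesssim 1
\end{equation*}
uniformly in $x \in D$; this in turn follows from Lemma \ref{fundphi}(iii) by the usual polar-coordinate decomposition adapted to the foliation $\{r = \mbox{const}\}$. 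The extension to all $1 < p < \infty$ then proceeds by the non-isotropic Calder\'on-Zygmund theorem, as in \cite{Eh09a, Eh09b}, once one has verified the H\"older-regularity of $\lra_0$ off the diagonal in the metric $d$ together with the cancellation needed to control the $T1$-type bilinear form.

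The main obstacle is the behavior near the Morse-singular points of $\partial D$, where $\gamma$ vanishes and the non-isotropic balls degenerate, so $\overline{D}$ is not a space of homogeneous type in the usual sense near those points. One handles this by exploiting that such points are isolated and $r$ is locally a non-degenerate real quadratic form there: in Morse coordinates a direct computation shows the singular-point contribution to each Schur integral is uniformly bounded in $x$, and the kernel retains the necessary H\"older estimates across these points after absorbing the degeneracy of $\gamma$ through the polar variables supplied by the Morse normal form.
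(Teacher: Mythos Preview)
Your approach is quite different from the paper's and contains a real gap.

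The paper avoids Calder\'on--Zygmund theory entirely. It proves the weighted Schur estimate
\begin{equation*}
\sup_{y}\int_D |\lra_0(x,y)|\,|r(x)|^{-\delta}|r(y)|^{\delta}\,dV(x)<\infty \qquad (0<\delta<1),
\end{equation*}
by adapting Theorem~3.4 of \cite{EhLi} to the present non-smooth situation, and then $L^p$-boundedness for every $1<p<\infty$ follows at once from H\"older's inequality (the Schur test with weight $|r|^{\pm\delta}$, choosing $\delta$ according to $p$). Note that this uses only $|\lra_0|$: the operator with kernel $|\lra_0|$ is already $L^p$-bounded, so no cancellation enters, and the Morse singularities are handled within the same weighted integral.

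Your proposed $L^2$ step does not work as written: finiteness of $\int_D|\phi|^{-n(1-\delta)}\,dV$ only controls the operator with kernel $|\lra_0|^{1-\delta}$, not $A_0$ itself. Moreover the majorant $|\lra_0|\lesssim|\phi|^{-n}$ is not valid for a generic type-$0$ kernel; for instance $R_2/\phi^{n+1}$ has smooth type $0$ and $N=2$, hence type $0$, but is only bounded by $\gamma^2|\phi|^{-(n+1)}$, and $\gamma^2\not\lesssim|\phi|$ near the boundary diagonal away from the critical points. Thus the kernel sits at the genuinely critical exponent for the quasi-metric $d$, and an unweighted Schur test cannot close. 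Passing to a $T1$ theorem does not rescue the argument either: admissible kernels here are specified purely by size and carry no built-in cancellation, yet the paper's argument shows none is required. The idea you are missing is precisely the weight $|r|^{-\delta}$, which exploits the term $-r(x)-r(y)$ in the lower bound for $\mbox{Re}\,\phi$ from Lemma~\ref{fundphi}(iii) to make the borderline integral converge.
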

\begin{proof}
The proof of Theorem 3.4 of \cite{EhLi} may be adapted to our
situation to prove the inequality
 \begin{equation*}
 \sup_{y\in\Omega}\int |\lra_0(x,y)|
 |r(x)|^{-\delta}|r(y)|^{\delta}dV(x) <\infty
\end{equation*}
for $\delta<1$, from which the theorem follows after an
application of H\"{o}lder's inequality.
\end{proof}

We let $\lre_{j-2n}^i(x,y)$, $j\ge 1$, be a kernel of the form
\begin{equation*}
\lre_{j-2n}^i(x,y)=
 \frac{\lre_{m}(x,y)}{\rho^{2k}(x,y)},
\end{equation*}
where $m-2k\ge j-2n$. We denote by $E_{j-2n}$ the corresponding
isotropic operator.

\begin{thrm}
\label{E1properties}
 Let $T$ be a smooth tangential vector field.
Set $E$ to be an operator with kernel of the form
 $\lre^i_{1-2n}(x,y)R_1(x)$ or $\lre^i_{2-2n}(x,y)$ .
Then we have the following properties:
\begin{align*}
&i)\ E_{j-2n}:L^p(D)\rightarrow L^s(D)\qquad j\ge 1\\
&ii)\ E:L^{\infty,2+\epsilon,0}(D)\rightarrow
 \Lambda_{\alpha,2-\epsilon'}(D)
\qquad 0<\epsilon,\epsilon', \quad \alpha+\epsilon+\epsilon'<1\\
&iii)\ \gamma^{\ast}TE:\Lambda_{\alpha,2+\epsilon}(D)
\rightarrow L^{\infty,\epsilon',0}(D) \qquad \epsilon<\epsilon'\\
&iv)\ E:L^{\infty,\epsilon,\delta}(D)\rightarrow
 L^{\infty,\epsilon',0}(D) \qquad \epsilon<\epsilon',
 \quad \delta<1/2 +(\epsilon'-\epsilon)/2
\end{align*}
for any $1\le p\le s\le\infty$ with $1/s>1/p-j/2n$.
\end{thrm}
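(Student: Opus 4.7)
The plan is to adapt the isotropic-kernel estimates of \cite{Eh09a, Eh09b} by exploiting two structural facts. First, $\lre^i_{j-2n}(x,y) = \lre_m(x,y)/\rho^{2k}(x,y)$ with $m-2k \ge j-2n$ satisfies $|\lre^i_{j-2n}(x,y)|\lesssim \rho^{j-2n}(x,y)$ modulo smooth factors, so $E_{j-2n}$ is a Riesz-type potential of order $j$ on the $2n$-real-dimensional manifold $D$. Second, $R_1(x)$ is a first-order derivative of $r$, so $|R_1(x)|\lesssim \gamma(x)$; this produces the extra $\gamma$-weight needed whenever a kernel of the form $\lre^i_{1-2n}R_1$ is considered. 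Claim (i) then reduces to a Schur test on the bound $|\lre^i_{j-2n}|\lesssim \rho^{j-2n}$, i.e., to the Hardy--Littlewood--Sobolev inequality transplanted to $D$.

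For (ii) and (iv) I would split the integral defining $E f(x)$ into an inner piece on a ball of fixed radius around $x$, where the isotropic singularity is locally integrable, and a harmless outer piece. The weight hypothesis on $f$, together with $R_1(x) \lesssim \gamma(x)$ in the first-kernel case, yields the $L^\infty$-bound with the claimed exponents. The H\"older estimate in (ii) follows from the standard dichotomy: for two points $x_1, x_2$ at distance $h$, integrate the kernel directly on $\{\rho(x_j,y)\le 2h\}$ and apply the mean value inequality on the complement, where the differentiated kernel is bounded by $\rho^{-2n}R_1(x)$ or $\rho^{1-2n}$ times $h$, both integrable over the exterior region and combining to give an $h^\alpha$-estimate.

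Part (iii) is the main obstacle: after $T_x$ is applied, the kernel is of order $\rho^{-2n}R_1$ or $\rho^{1-2n}$, which is only logarithmically integrable. To recover, I would split $f(y) = f(x) + (f(y)-f(x))$ in the convolution. For the first piece an integration by parts in $y$ shifts $T_x$ across and restores an integrable kernel, with the boundary contribution absorbed by the vanishing of $R_1$ at critical points or by the $r$-weight in $\lre^i_{j-2n}$. For the difference piece, the hypothesis $f\in\Lambda_{\alpha,2+\epsilon}$ gives $\gamma^{2+\epsilon}f\in\Lambda_\alpha$; writing $f(y)-f(x)$ in terms of $(\gamma^{2+\epsilon}f)(y)-(\gamma^{2+\epsilon}f)(x)$ and the difference $\gamma^{-(2+\epsilon)}(y)-\gamma^{-(2+\epsilon)}(x)$ produces a factor like $\gamma^{-(2+\epsilon)}(y)\rho^\alpha$ plus lower-order terms. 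The factor $\gamma^\ast = \gamma(y)$ from the operator absorbs one power of this singular weight, leaving an integrable kernel of order $\rho^{-2n+\alpha}$ with an $x$-weight compatible with the target $L^{\infty,\epsilon',0}(D)$. The principal technical difficulty is the weight bookkeeping: one must check that all powers of $\gamma$ at $x$ and $y$, the vanishing of $R_1$ at critical points of $r$, and the H\"older cancellation in $f$ conspire to yield an absolutely convergent integrand with exactly the exponents claimed.
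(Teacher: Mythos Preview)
The paper does not supply its own proof of this theorem; the entire section in which it sits begins with the sentence ``The proofs follow as in \cite{Eh09a, Eh09b},'' and no further argument is given. So there is no in-paper proof to compare against, only the cited preprints. That said, your plan is the expected one for isotropic kernels of this type, and for parts $(i)$, $(ii)$, $(iv)$ it is essentially the argument one finds in those references: the bound $|\lre^i_{j-2n}|\lesssim \rho^{j-2n}$ together with $|R_1(x)|\lesssim\gamma(x)$ reduces everything to Riesz-potential estimates, with the weighted variants handled via the Schmalz coordinate patch near critical points (Lemma~\ref{schmalzlem}) exactly as in the proof of Lemma~\ref{weightedz} later in the paper.

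For part $(iii)$ your strategy---split $f$ into its value at the output point plus a H\"older remainder---is the right idea, but the execution as written has two issues. First, you have the roles of $x$ and $y$ reversed: in this paper's conventions the kernel is integrated in $x$ and $T$ acts on the output variable $y$, so the split should read $f(x)=f(y)+\bigl(f(x)-f(y)\bigr)$, and the ``integration by parts in $y$'' you describe does not make sense as stated. What one actually uses is the structural relation $T_y\lre^i_{j-2n}=-T'_x\lre^i_{j-2n}+\lre^i_{j-2n}$ for a related tangential field $T'$, followed by integration by parts in $x$; since $T'$ is tangential the boundary term over $\partial D$ vanishes, not because $R_1$ vanishes at critical points (it does not vanish on $\partial D$ away from the finitely many critical points). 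Second, for the constant piece $f(y)\,T_y\!\int K\,dV(x)$ you will not get the claimed weight $\gamma^{-\epsilon'}(y)$ from size alone; one needs either the integration-by-parts transfer just described or the commutator relations of Theorem~\ref{commutator} (also proved in \cite{Eh09b}) to see that $\gamma^\ast T_y$ applied to an $E$-type operator returns an operator of the same class up to controlled errors. Once these two points are repaired, the weight bookkeeping you outline for the difference piece goes through.
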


\section{Integral representation}
 We give here the basic integral
representation.  As in \cite{EhLi}, the crucial property here is
that certain operators in the integral representation, while of
type 0, are of smooth type 1, hence multiplication by a factor of
$R_1$ turns them into type 1 operators with smoothing properties.

We start with the differential forms
\begin{align*}
&\beta(x,y)=
 \frac{\partial_{x}\rho^2(x,y)}{\rho^2(x,y)}\\
&\alpha_{\epsilon}(x,y)=\xi(x)\frac{\partial
r_{\epsilon}(x)}{\phi_{\epsilon}(x,y)},
\end{align*}
where $\xi(x)$ is a smooth patching function which is equivalently
1 for $|r(x)|<\delta$ and 0 for $|r(x)|>\frac{3}{2} \delta$, and
$\delta>0$ is sufficiently small.  We define
\begin{equation*}
C^{\epsilon}=C(\alpha_{\epsilon},\beta)
 = \sum_{\mu=0}^{n-2}
 \left(\frac{1}{2\pi i}\right)^n
C_{\mu}(\alpha_{\epsilon},\beta),
\end{equation*}
where
\begin{equation*}
C_{\mu}(\alpha_{\epsilon},\beta)
 = \alpha_{\epsilon}\wedge
 \beta\wedge(\mdbar_{x}\alpha_{\epsilon})^{\mu}\wedge
 (\mdbar_{x}\beta)^{n-\mu-2}.
\end{equation*}
Denoting the Hodge $\ast$-operator by $\ast$, we then define
\begin{equation*}
\lrl^{\epsilon}(x,y)=-\ast_{x}\overline{C^{\epsilon}(x,y)}.
\end{equation*}
We also write
\begin{equation*}
K^{\epsilon}(x,y) = \frac{1}{(2\pi
i)^n}\alpha_{\epsilon}\wedge(\mdbar_{x}\alpha_{\epsilon})^{n-1}
\end{equation*}
and
\begin{equation*}
\Gamma^{\epsilon}(x,y)
 =\frac{(n-2)!}{2\pi^n}\frac{1}{\rho^{2n-2}}.
\end{equation*}

The kernels in our integral representation are defined through the
following:
\begin{align*}
\lrt_0^{\epsilon}(x,y)&=\vartheta_{x}\lrl^{\epsilon}(x,y)-
 \ast\overline{K}^{\epsilon}(x,y)+
 \mdbar_{x}\Gamma^{\epsilon}(x,y)\\
\lrp_0^{\epsilon}(x,y)&=\ast\partial_{x}\overline{K^{\epsilon}(x,y)}.
\end{align*}
We denote the operators with kernels $\lrt_0^{\epsilon}$ and
$\lrp_0^{\epsilon}$
by ${\mathbf T_0^{\epsilon}}$ and ${\mathbf P_0^{\epsilon}}$,
respectively.  We also define ${\mathbf T_0}$ and ${\mathbf P_0}$
to be the operators with kernels which are the limits of the
kernels $\lrt_0^{\epsilon}$ and $\lrp_0^{\epsilon}$, respectively,
as $\epsilon\rightarrow 0$.
\begin{thrm}
 \label{basic0}
 Let $f\in L^2(D)\cap \mbox{Dom }\mdbar$.  Then (in the sense of
 distributions)
\begin{equation}
\label{basiceqn0}
 f={\mathbf T}_0\mdbar f + {\mathbf P}_0 f +
(A_{(0,2)}+E_{2-2n})\mdbar f + ( A_{(0,1)} +E_{1-2n})f.
\end{equation}
\end{thrm}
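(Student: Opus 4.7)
The plan is to establish the identity first on the smooth approximating domains $D_{\epsilon}$ by the classical Koppelman homotopy formula, then to take the limit $\epsilon \to 0$, and to identify the additional terms that arise in this limit as operators of the prescribed admissible types.

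First I would verify that on the smooth strictly pseudoconvex domain $D_{\epsilon}$ the Koppelman formula
\begin{equation*}
f = \mathbf{T}_0^{\epsilon}\mdbar f + \mathbf{P}_0^{\epsilon} f
\end{equation*}
holds in the distributional sense. This is the content of the classical construction of \cite{LR86}: apply Stokes' theorem to the Koppelman double form $C(\alpha_{\epsilon},\beta)$ on $D_{\epsilon}$ (a region on which $\alpha_{\epsilon}$ is a genuine Leray section for the smooth pseudoconvex $\partial D_{\epsilon}$), dualize via the Hodge $\ast$ to obtain a formula for $(0,0)$-forms, and use that $\Gamma^{\epsilon}$ is a fundamental solution of the Laplacian so the diagonal singularity reproduces $f(x)$. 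The hypothesis $f\in\mbox{Dom }\mdbar$ is used here to push the boundary traces into interior integrals; because $f\in L^2(D)$ but the boundary is non-smooth, this step should be justified by truncating against $D_{-\epsilon'}$ and passing to the limit in $\epsilon'$ through a standard density argument.

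Next I would restrict the identity to $x\in D$ and decompose each kernel. With $r_{\epsilon}=r+\epsilon$, Lemma \ref{fundphi} provides the lower bounds on $\phi$ needed to apply dominated convergence; hence the kernels $\lrt_0^{\epsilon}$ and $\lrp_0^{\epsilon}$ can be replaced by $\lrt_0$ and $\lrp_0$ as $\epsilon\to 0$. The difference generates remainder terms of two kinds: (a) terms in which a differentiation lands on a factor of $r$ inside $\alpha_{\epsilon}$ or $\phi_{\epsilon}$, producing an additional factor of $R_1$ and fitting into the admissible-kernel framework as $\lra_{(0,2)}$ (acting on $\mdbar f$) and $\lra_{(0,1)}$ (acting on $f$); and (b) terms in which a derivative falls on the cutoff $\xi$ or the patching $\varphi$, which are supported away from the diagonal, hence purely isotropic, giving the $E_{2-2n}$ and $E_{1-2n}$ contributions respectively.

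The main obstacle will be the precise bookkeeping of the types of the remainder kernels near the critical set of $r$, where $\gamma\to 0$ and the factor $\partial r_{\epsilon}/\phi_{\epsilon}$ in $\alpha_{\epsilon}$ becomes delicate. One must verify the counts of $R_N$, $\phi^{t_j}$, $r^l$, and $\rho^{2j}$ in each remainder against the double-type formulas $s=2n+j+\min\{2,t-l-m\}-2(t_0+t-l-m)$ and $\tau=s-\max\{0,2-N\}$, so that the corrections land exactly at double type $(0,2)$ and $(0,1)$. A secondary check is that the isotropic pieces coming from the $(1-\varphi)\rho^2$ summand of $\phi$ are absorbed into $E_{j-2n}$ rather than into $\lra_{(0,j)}$, which is automatic once one observes that on the support of $1-\varphi$ one has $\rho\gtrsim \varepsilon$ and the entire kernel is dominated by a negative power of $\rho$ alone.
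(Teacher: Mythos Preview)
Your proposal has a genuine gap in identifying where the error terms $A_{(0,2)}$, $A_{(0,1)}$, $E_{2-2n}$, $E_{1-2n}$ come from. You assert that on the smooth domain $D_{\epsilon}$ the exact formula $f=\mathbf{T}_0^{\epsilon}\mdbar f+\mathbf{P}_0^{\epsilon}f$ holds, and that the error terms arise only when passing to the limit $\epsilon\to 0$. This is not the case: even on $D_{\epsilon}$ the representation already reads
\[
f=\mathbf{T}_0^{\epsilon}\mdbar f+\mathbf{P}_0^{\epsilon}f+(A_{(0,2)}^{\epsilon}+E_{2-2n})\mdbar f+(A_{(0,1)}^{\epsilon}+E_{1-2n})f,
\]
and the limit $\epsilon\to 0$ simply replaces each $\epsilon$-kernel by its limit kernel (the convergence is straightforward by dominated convergence). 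The error terms are structural, not artifacts of the limiting process.

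Concretely, the isotropic pieces $E_{j-2n}$ arise because we are on a manifold: the Bochner--Martinelli--Koppelman formula on $D_{\epsilon}$ already contains terms $(f,\lre_{1-2n})$ and $(\mdbar f,\lre_{2-2n})$, since the geodesic distance $\rho$ and the local Euclidean distance $R$ differ, and the parametrix $\Gamma^{\epsilon}$ is not an exact fundamental solution. The admissible pieces $A_{(0,1)}^{\epsilon}$, $A_{(0,2)}^{\epsilon}$ arise from the Koppelman homotopy step comparing $\Omega_0(\beta)$ with $\Omega_0(\alpha_{\epsilon})$: one must pass through the local Levi section $\alpha^0$ and the local metric form $\beta^0$, and the discrepancies $\Omega_0(\alpha^0)-\Omega_0(\alpha_{\epsilon})$, $C_{\mu}(\alpha^0,\cdot)-C_{\mu}(\alpha_{\epsilon},\beta)$, together with the replacement of $\rho^2$ by $P_{\epsilon}$ in converting boundary integrals to volume integrals via Stokes, produce a long list of explicit kernels whose types must be computed one by one. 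The paper singles out the term $C_{\mu}(\alpha^0,(\partial\rho^2+\lre_2)/\rho^2)-C_{\mu}(\alpha_{\epsilon},\beta)$ as the delicate one: a naive estimate gives only double type $(-1,1)$, and obtaining $(0,1)$ requires writing its $\mdbar_{\zeta}$ in the form $\mdbar_{\zeta}(\lre_2\phi_{\epsilon}^{-1-\mu}P_{\epsilon}^{-(n-1-\mu)})+\lre_2 r_{\epsilon}^{\ast}\wedge\mdbar r_{\epsilon}\,\phi_{\epsilon}^{-1-\mu}P_{\epsilon}^{-(n-\mu)}$ on $\partial D_{\epsilon}$ before applying Stokes. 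Your outline contains none of this analysis, and the mechanism you propose (differentiation landing on $r$ during the $\epsilon$-limit, cutoffs supported away from the diagonal) does not generate these terms.
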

The proof follows as in \cite{Eh09a},
 up until the examination of the term (\ref{improvement}) below.
 By carefully considering such a term, we are able to show that
 the resulting integral kernels in the representation are of
 double type $(0,1)$, whereas the analysis in \cite{Eh09a} would
 only grant us kernels of double type $(-1,1)$.
This information is not needed in the $q>0$ case due to
  the difference in
the definitions of the term $P_{\epsilon}(\zeta,z)$ in the cases
$q=0$ and $q>0$.
\begin{proof}[Sketch of proof.]
 We first prove the corresponding statement for the domain $D_{\epsilon}$
 and in the case $f\in
 C^1(\overline{D})$.

 Our starting point is the Bochner-Martinelli-Koppelman (BMK)
formula for $f\in C^1(\overline{D}_{\epsilon})$.  Let $B_0$ be
defined by
\begin{equation*}
B_0=\Omega_0(\beta)=\frac{1}{(2\pi i)^n}
 \beta\wedge(\mdbar_{\zeta}\beta)^{n-1}.
\end{equation*}
Then for $y\in D_{\epsilon}$
\begin{multline}
\label{intrepwbndry}
 f(y)=\int_{\partial D_{\epsilon}}f(x)\wedge B_0(x,y)
  -\int_{D_{\epsilon}}\mdbar f(x)\wedge B_0(x,y)
  \\
  +(f(x),\lre_{1-2n}(x,y))+(\mdbar
  f(x),\lre_{2-2n}(x,y)).
 \end{multline}

Define the kernels $K_0^{\epsilon}(x,y)$ by
$K_0^{\epsilon}(x,y)=\Omega_0(\alpha_{\epsilon})$.  We then
proceed to replace the boundary integral in the BMK formula by
\begin{equation*}
\int_{\partial D_{\epsilon}}f\wedge K_0^{\epsilon}.
\end{equation*}
Let $x_0\in\partial D_{\epsilon}$ be a fixed point and $U$ a
sufficiently small neighborhood of $x_0$ on which $\zeta$ is a
local coordinate map. $F(\zeta,z)$ vanishes on the diagonal of
$\overline{U}\times\overline{U}$, so Hefer's theorem applies to
give us
\begin{equation*}
F(\zeta,z)=\sum_{j=1}^n h_j(\zeta,z)(\zeta_j-z_j).
\end{equation*}
We set
\begin{equation*}
 \alpha^0(\zeta,z)=\frac{\sum_{j=1}^n
h_j(\zeta,z)d\zeta_j}{F}.
\end{equation*}
With the metric given by
\begin{equation*}
ds^2=\sum g_{jk}(\zeta) dz_jd\overline{z}_k,
\end{equation*}
 we define
\begin{align*}
&b^0(\zeta,z)=\sum_{j,k=1}^n
g_{jk}(\overline{\zeta}_k-\overline{z}_k)d\zeta_j\\
 &R^2(\zeta,z)=
 \sum_{j,k=1}^n
 g_{jk}(\zeta_j-z_j)(\overline{\zeta}_k-\overline{z}_k)\\
&\beta^0(\zeta,z)=\frac{b^0(\zeta,z)}{R^2(\zeta,z)}.
\end{align*}
With use of the transition kernels $C$ defined above, we have via
Koppelman's homotopy formula
\begin{equation*}
\Omega_0(\beta^0)=\Omega_0(\alpha^0)
 -\mdbar_{\zeta}C(\alpha^0,\beta^0)
 .
\end{equation*}
On $(\partial D\cap U)\times U$ we have
\begin{equation*}
\Omega_0(\alpha^{\epsilon})=\Omega_0(\alpha_0)
 +\frac{\lre_1(\zeta,z)}{\phi_{\epsilon}(\zeta,z)^n}
\end{equation*}
and on $U\times U$ we have
\begin{equation*}
\Omega_0(\beta)=\frac{R^{2n}}{\rho^{2n}}\Omega_0(\beta^0)
 +\lre_{2-2n}.
\end{equation*}
Thus we write
\begin{align*}
\Omega_0(\beta)&=\frac{R^{2n}}{\rho^{2n}}\Omega_0(\beta^0)
 +\lre_{2-2n}\\
 &=\frac{R^{2n}}{\rho^{2n}}(\Omega_0(\beta^0)-\Omega_0(\alpha^0))
 +\Omega_0(\alpha^0)
 +\frac{\lre_{2n+1}}{\rho^{2n}}\Omega_0(\alpha^0)+\lre_{2-2n},
\end{align*}
by which it then follows from the homotopy formula and the
relations between $b^0$ and $\rho^2$ and $R^2$, exactly as it was
obtained in \cite{LiMi}, that we have
\begin{align}
\label{trans}
 \Omega_0(\beta)
 =&\Omega_0(\alpha_{\epsilon})-
 \mdbar_{\zeta}C^{\epsilon}+
(\Omega_0(\alpha^0)-\Omega_0(\alpha_{\epsilon})) +
\frac{\lre_{2n+1}}{\rho^{2n}}\Omega_0(\alpha^0)\\
\nonumber
 &+\lre_{2-2n}+\mdbar_{\zeta}\Bigg[\Bigg(
 C\left(\alpha^0,\frac{\partial\rho^2+\lre_2}{\rho^2}\right)
 -C(\alpha_{\epsilon},\beta)\Bigg)\\
 \nonumber
  &\qquad+\sum_{\mu=0}^{n-2}\frac{\lre_{3+2\mu}}{(\rho^2)^{1+\mu}}
  C_{\mu}\left(\alpha^0,\frac{\partial\rho^2+\lre_2}{\rho^2}\right)
  \Bigg]\\
  \nonumber
  &+\sum_{\mu=0}^{n-2}\frac{\lre_{4+2\mu}}{(\rho^2)^{2+\mu}}
  C_{\mu}\left(\alpha^0,\frac{\partial\rho^2+\lre_2}{\rho^2}\right)
  .
\end{align}
We now work with $\zeta\in\partial D_{\epsilon}$ so that
$F=\phi_{\epsilon}$.

We have
\begin{equation*}
\frac{\lre_{2n+1}}{\rho^{2n}}\Omega_0(\alpha^0)
 =R_1\frac{\lre_{2n+1}}{\phi_{\epsilon}^n\rho^{2n}}
\end{equation*}
and
\begin{equation*}
 \Omega_0(\alpha_{\epsilon})-\Omega_0(\alpha^0)
 =\frac{\lre_1}{\phi_{\epsilon}^n}.
\end{equation*}
Furthermore,
\begin{equation*}
C_{\mu}\left(\alpha^0,\frac{\partial\rho^2+\lre_2}{\rho^2}\right)
 =R_1\frac{\lre_1}{\phi_{\epsilon}^{1+\mu}(\rho^2)^{n-1-\mu}},
\end{equation*}
and thus
\begin{align*}
\mdbar_{\zeta}\Bigg(
 \sum_{\mu}&\frac{\lre_{3+2\mu}}{(\rho^2)^{1+\mu}}
  C_{\mu}\left(\alpha^0,\frac{\partial\rho^2+\lre_2}{\rho^2}\right)
 \Bigg)=
 \mdbar_{\zeta}\left(
  R_1\frac{\lre_{4+2\mu}}{\phi_{\epsilon}^{1+\mu}\rho^{2n}}
\right)\\
&=\frac{\lre_{4+2\mu}}{\phi_{\epsilon}^{1+\mu}\rho^{2n}}
 +R_1\frac{\lre_{3+2\mu}}{\phi_{\epsilon}^{1+\mu}\rho^{2n}}
 +R_1\frac{\lre_{5+2\mu}+\lre_{4+2\mu}\wedge\mdbar_{\zeta}r_{\epsilon}}{\phi_{\epsilon}^{2+\mu}\rho^{2n}}.
\end{align*}

We have
\begin{equation}
\label{improvement}
C_{\mu}\left(\alpha^0,\frac{\partial\rho^2+\lre_2}{\rho^2}\right)
 -C_{\mu}(\alpha_{\epsilon},\beta)=
\frac{\lre_2}{\phi_{\epsilon}^{1+\mu}(\rho^2)^{n-1-\mu}},
\end{equation}
 and by carefully handling this term we obtain a
slight improvement, with tangible benefits, over its treatment in
\cite{LiMi} and even in \cite{Eh09a}, where the additional
information has no effect.
 In our relating boundary integrals to volume integrals, with the
 use of Stoke's Theorem, we replace $\rho^2$ with $P_{\epsilon}$,
 as they are equivalent on $\partial D_{\epsilon}$, in the
 kernels.  We therefore, use here the relation
\begin{equation*}
\mdbar_{\zeta}\Bigg(C_{\mu}\left(\alpha^0,\frac{\partial\rho^2+\lre_2}{\rho^2}\right)
 -C_{\mu}(\alpha_{\epsilon},\beta)\Bigg)
  =
  \mdbar_{\zeta}\left(
  \frac{\lre_2}{\phi_{\epsilon}^{1+\mu}P_{\epsilon}^{n-1-\mu}}\right)
  +
  \frac{\lre_2r^{\ast}_{\epsilon}\wedge\mdbar r_{\epsilon}}
  {\phi_{\epsilon}^{1+\mu}P_{\epsilon}^{n-\mu}}
\end{equation*}
for $\zeta\in\partial D_{\epsilon}$.  The upshot is that the
integral kernels in our final expression which stem from the term
(\ref{improvement}) are of double type $(0,1)$.

(\ref{trans}) can thus be written
\begin{align*}
 \Omega_0(\beta)=&\Omega_0(\alpha_{\epsilon})
  -\mdbar_{\zeta}C(\alpha_{\epsilon},\beta)
  +\lre_{2-2n}\\
  &+\sum_{\mu=0}^{n-2}\mdbar_{\zeta}\left(
  \frac{\lre_{4+2\mu}}{\phi_{\epsilon}^{1+\mu}P_{\epsilon}^{n}}
  \right)
  +r_{\epsilon}^{\ast}
\sum_{\mu=0}^{n-2}\frac{\lre_{2+2\mu}\wedge\mdbar
r_{\epsilon}}{\phi_{\epsilon}^{1+\mu}P_{\epsilon}^{n}}
+\frac{\lre_1}{\phi_{\epsilon}^n}\\
&
  +R_1\sum_{\mu=0}^{n-2}\frac{\lre_{3+2\mu}}{\phi_{\epsilon}^{1+\mu}\rho^{2n}}
  +\sum_{\mu=0}^{n-2}\frac{\lre_{4+2\mu}}{\phi_{\epsilon}^{1+\mu}\rho^{2n}}
 +R_1\sum_{\mu=0}^{n-2}\frac{\lre_{5+2\mu}+
 \lre_{4+2\mu}\wedge\mdbar_{\zeta}r_{\epsilon}}{\phi_{\epsilon}^{2+\mu}\rho^{2n}},
 \end{align*}
again for $\zeta\in\partial D_{\epsilon}$.

 Thus, after integrating by parts we obtain
\begin{align*}
\int_{\partial D_{\epsilon}}
 f\wedge &B_0  =\\
 & \int_{\partial D_{\epsilon}} f\wedge\Omega_0(\alpha_{\epsilon})
   + \int_{\partial D_{\epsilon}} \mdbar f\wedge
  C^{\epsilon} + \int_{\partial
  D_{\epsilon}}f\wedge\lre_{2-2n}\\
 &+ \sum_{\mu=0}^{n-2}\int_{D_{\epsilon}}\mdbar_{\zeta}\left[f\wedge
  \left(\mdbar_{\zeta}\left(
  \frac{\lre_{4+2\mu}}{\phi_{\epsilon}^{1+\mu}P_{\epsilon}^{n}}
  \right)
  +r_{\epsilon}^{\ast}
\frac{\lre_{2+2\mu}\wedge\mdbar
r_{\epsilon}}{\phi_{\epsilon}^{1+\mu}P_{\epsilon}^{n}}
+\frac{\lre_1}{\phi_{\epsilon}^n}\right)\right]\\
&+\sum_{\mu=0}^{n-2}\int_{\partial D_{\epsilon}}f\wedge
 \left(R_1\frac{\lre_{3+2\mu}}{\phi_{\epsilon}^{1+\mu}\rho^{2n}}
  +\frac{\lre_{4+2\mu}}{\phi_{\epsilon}^{1+\mu}\rho^{2n}}
 +R_1\frac{\lre_{5+2\mu}}{\phi_{\epsilon}^{2+\mu}\rho^{2n}}\right),
\end{align*}
where we use an application of Stoke's Theorem in the second
integral on the right hand side.

We now replace all occurrences of $\rho^2$ in the denominators by
$P_{\epsilon}$, since the two are equal on $\partial
D_{\epsilon}$, in the remaining boundary integrals, and then
change those boundary integrals to volume integrals by Stoke's
Theorem:
\begin{align*}
\int_{\partial D_{\epsilon}}
 f\wedge &B_0  =\\
 & \int_{D_{\epsilon}} \mdbar f\wedge\Omega_0(\alpha_{\epsilon})
   +\int_{D_{\epsilon}} f\wedge
   \mdbar_{\zeta}\Omega_0(\alpha_{\epsilon})
   -\int_{D_{\epsilon}} \mdbar f\wedge
  \mdbar_{\zeta}C^{\epsilon}+ \int_{D_{\epsilon}}\mdbar f\wedge\lre_{2-2n}\\
   &
+ \int_{D_{\epsilon}}f\wedge\lre_{1-2n}+
\sum_{\mu=0}^{n-2}\int_{D_{\epsilon}}\mdbar f\wedge
  \Bigg(
  \frac{\lre_{3+2\mu}}{\phi_{\epsilon}^{1+\mu}P_{\epsilon}^{n}}
  +R_1\frac{\lre_{4+2\mu}}{\phi_{\epsilon}^{2+\mu}P_{\epsilon}^{n}}\\
 &\qquad
 +\frac{\lre_{5+2\mu}}{\phi_{\epsilon}^{1+\mu}P_{\epsilon}^{n+1}}
 +R_1
 \frac{\lre_{4+2\mu}r^{\ast}_{\epsilon}}{\phi_{\epsilon}^{1+\mu}P_{\epsilon}^{n+1}}
  +R_1
\frac{\lre_{2+2\mu}r_{\epsilon}^{\ast}}{\phi_{\epsilon}^{1+\mu}P_{\epsilon}^{n}}
+\frac{\lre_1}{\phi_{\epsilon}^n}\Bigg)\\
& + \sum_{\mu=0}^{n-2}\int_{D_{\epsilon}}f\wedge
  \Bigg(R_1
\frac{\lre_{1+2\mu}r_{\epsilon}^{\ast}}{\phi_{\epsilon}^{1+\mu}P_{\epsilon}^{n}}
+R_1
\frac{\lre_{3+2\mu}r_{\epsilon}^{\ast}}{\phi_{\epsilon}^{1+\mu}P_{\epsilon}^{n+1}}+
\frac{\lre_0}{\phi_{\epsilon}^n}+\frac{\lre_2+\lre_1\wedge\mdbar
r_{\epsilon}}
 {\phi_{\epsilon}^{n+1}}\Bigg)\\
 &+\sum_{\mu=0}^{n-2}\int_{D_{\epsilon}}f\wedge
 \Bigg(\frac{\lre_{3+2\mu}}{\phi_{\epsilon}^{1+\mu}P_{\epsilon}^n}
  +R_1\frac{\lre_{2+2\mu}}{\phi_{\epsilon}^{1+\mu}P_{\epsilon}^n}+
  R_2\frac{\lre_{3+2\mu}}{\phi_{\epsilon}^{2+\mu}P_{\epsilon}^n}\\
  &\qquad +R_1\frac{\lre_{4+2\mu}}{\phi_{\epsilon}^{1+\mu}P_{\epsilon}^{n+1}}
  +\frac{\lre_{5+2\mu}}{\phi_{\epsilon}^{1+\mu}P_{\epsilon}^{n+1}}
  \Bigg).
\end{align*}

Inserting this expression of the boundary integral into
(\ref{intrepwbndry}), and using our notation of operators of a
certain type, we can write
\begin{align*}
f(z)=&\int_{D_{\epsilon}}f\wedge
 \mdbar_{\zeta}\Omega_0(\alpha_{\epsilon}) +\int_{D_{\epsilon}}
 \mdbar f\wedge \Omega_0(\alpha_{\epsilon})
 -\int_{D_{\epsilon}}\mdbar f\wedge
 \mdbar_{\zeta} C^{\epsilon}\\
 &-\int_{D_{\epsilon}}\mdbar f\wedge B_0^{\epsilon}
  \\
   & +(f,\lre_{1-2n})
   +\left(f,\lra^{\epsilon}_{(0,1)}\right)
  +(\mdbar f,\lre_{2-2n})
  +\left(\mdbar f,\lra_{(0,2)}^{\epsilon}\right).
\end{align*}
Then as in \cite{LR86}, a rearrangement of terms leads to the
representation for $f\in C^1(\overline{D})$ and $z\in
D_{\epsilon}$
\begin{equation*}
 f={\mathbf T}_0^{\epsilon}\mdbar f + {\mathbf P}_0^{\epsilon} f +
(A_{(0,2)}^{\epsilon}+E_{2-2n})\mdbar f + ( A_{(0,1)}^{\epsilon}
+E_{1-2n})f.
\end{equation*}
We then let $\epsilon\rightarrow0$ and the right hand side
approaches the right hand side of (\ref{basiceqn0}).  This
establishes (\ref{basiceqn0}) for $f\in C^1(\overline{D})$.

To see the theorem is valid for
 $f\in L^2(D)\cap \mbox{Dom }\mdbar$, we refer to
Theorems \ref{dertype1}, \ref{a0a2}, and \ref{E1properties} to
show all operators in (\ref{basiceqn0}) map $L^p$ to $L^p$ for
$p\ge 2$.
\end{proof}

\section{A symmetry argument}
From \cite{EhLi} we have the
\begin{prop}
 \label{cancel0}
\begin{equation*}
{\mathbf P}_0 - {\mathbf P}_0^{\ast} =  A_{(0,1)}.
\end{equation*}
\end{prop}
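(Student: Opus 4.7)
The plan is to write out both $\lrp_0(x,y)$ and the kernel of $\mathbf{P}_0^{\ast}$ in the representation (\ref{typerep}) and then subtract term by term, using Lemma \ref{fundphi}(iv) to extract an extra $\lre_3$ factor in every resulting numerator. The kernel $\lrp_0=\ast\partial_x\overline{K}$ is built from $\alpha=\xi\,\partial r/\phi$, so after expanding $\overline{K}=\overline{\alpha\wedge(\mdbar_x\alpha)^{n-1}}$ and applying $\partial_x$, every term takes the form
\[
\frac{\lre_j\,R_N}{\phi^{a}\,\overline{\phi}^{b}}
\]
with denominators that are pure powers of $\phi$ and $\overline{\phi}$ (no $P$, no $r$). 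The kernel of $\mathbf{P}_0^{\ast}$, read off from $\overline{\lrp_0(y,x)}$ and the volume form convention, has the same shape but with every $\phi,\overline{\phi}$ replaced by $\phi^{\ast},\overline{\phi}^{\ast}$ and every $\partial r(x)$ replaced by $\partial r(y)$.

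The symmetry step is then to rewrite each difference of fractions via
\[
\frac{1}{\phi^{a}\overline{\phi}^{b}}-\frac{1}{\phi^{\ast a}\overline{\phi}^{\ast b}}
=\frac{\phi^{\ast a}\overline{\phi}^{\ast b}-\phi^{a}\overline{\phi}^{b}}{\phi^{a}\overline{\phi}^{b}\phi^{\ast a}\overline{\phi}^{\ast b}},
\]
and to telescope the numerator using Lemma \ref{fundphi}(iv), $\phi-\phi^{\ast}=\lre_3$, and its conjugate. Each such numerator acquires one factor of $\lre_3$ multiplied by a polynomial in $\phi,\phi^{\ast},\overline{\phi},\overline{\phi}^{\ast}$ of total degree $a+b-1$, which cancels against the extra denominators. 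Consequently the difference kernel has the same $N$ (and hence the same ordinary type) as $\lrp_0$ itself, but smooth type increased by one. Since $\lrp_0$ has double type $(0,0)$, the difference lies in $\lra_{(0,1)}$, which is exactly the claim.

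The main obstacle is bookkeeping: one must match terms of $\lrp_0(x,y)$ and $\overline{\lrp_0(y,x)}$ carefully, because the Hodge $\ast$ and the swap $x\leftrightarrow y$ reshuffle the $\partial r(x)$ and $\partial r(y)$ factors and the exterior derivatives of $\alpha$. In particular, one needs to check that any numerator difference (as opposed to denominator difference) produces factors of the same $\lre_3$-type; this is because any $\partial r(x)-\partial r(y)$ is an $\lre_1$, and paired with the $\lre_2$ coming from one of the $\mdbar\alpha$ brackets produces an $\lre_3$ just as well. Once this matching is verified, the extension from the $\mathbb{C}^n$ setting of \cite{EhLi} to the manifold case is automatic, since $\phi$, $\phi^{\ast}$, and the identity $\phi-\phi^{\ast}=\lre_3$ are intrinsic to $r$ and $\rho$ and do not depend on the Henkin--Ram\'{i}rez function.
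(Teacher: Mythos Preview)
Your overall strategy is the right one and is precisely the symmetry argument that \cite{EhLi} carries out (the paper itself simply cites \cite{EhLi} here, so there is no alternative proof to compare against). There are, however, two bookkeeping slips worth fixing.

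First, the passage to the adjoint kernel is not quite as you describe. Forming $\overline{\lrp_0(y,x)}$ involves \emph{both} the swap $x\leftrightarrow y$ and a complex conjugation, so the denominators transform as $\overline{\phi}\mapsto \phi^{\ast}$ (and $\phi\mapsto \overline{\phi^{\ast}}$), not $\phi\mapsto\phi^{\ast}$, $\overline{\phi}\mapsto\overline{\phi^{\ast}}$. Concretely, the leading term of $\lrp_0$ is $R_2(x)\,l(x)/\overline{\phi}^{\,n+1}$, and that of $\lrp_0^{\ast}$ is $R_2(y)\,l(y)/(\phi^{\ast})^{n+1}$ (compare the display immediately following the proposition). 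The telescoping therefore requires
\[
\overline{\phi}-\phi^{\ast}=\lre_3,
\]
which is not literally Lemma~\ref{fundphi}(iv). It is, however, the companion identity obtained from the same second-order Taylor expansion of $r$ that gives $\phi-\phi^{\ast}=\lre_3$; you should state and use this version explicitly.

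Second, your treatment of the numerator differences is off. The factors $\mdbar_x\alpha$ contribute $\mdbar\partial r$, which is $\lre_0$, not $\lre_2$, so there is no ``pairing with an $\lre_2$'' to manufacture an $\lre_3$. What actually happens is simpler: the numerator of the leading term is a function of $x$ alone, so $R_2(x)l(x)-R_2(y)l(y)=\lre_1$, and an extra $\lre_1$ already raises the smooth type from $0$ to $1$ (check the formula for $s$). No $\lre_3$ is needed on the numerator side. With these two corrections your argument goes through and matches the cited proof.
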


We denote by $\lrs_{\infty}(x,y)$ a kernel which has the
properties that $\forall \lambda>0$
\begin{align*}
& \int_{D}|\lrs_{\infty}(x,y)|^{\lambda} dV(x)<\infty\\
& \int_{D}|\lrs_{\infty}(x,y)|^{\lambda} dV(y)<\infty,
\end{align*}
 and which is also smoothing in the sense that $\forall k$
\begin{equation*}
\| (\gamma T)^k{\mathbf S}_{\infty} f \|_{L^{\infty}}\lesssim
\|f\|_2,
\end{equation*}
where ${\mathbf S}_{\infty}$ is an operator associated with a
$\lrs_{\infty}$ kernel.  We have the mapping property
\begin{equation*}
{\mathbf S}_{\infty}:L^2(D)\rightarrow L^p(D)
\end{equation*}
$\forall p \ge 2$ by the generalized Young's inequality \cite{Ra}.

\begin{lemma}
\begin{equation*}
\mdbar_{x}  R_1\lrp_0^{\ast}=\mdbar_{x} \lra_{1}+\lrs_{\infty}.
\end{equation*}
\end{lemma}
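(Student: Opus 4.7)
The plan is to use Proposition \ref{cancel0} to trade $\lrp_0^{\ast}$ for $\lrp_0$ modulo a kernel of one higher type, and then to analyze $\mdbar_x\lrp_0$ directly from its explicit form.

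First, by Proposition \ref{cancel0} the kernels satisfy $\lrp_0^{\ast}=\lrp_0-\lra_{(0,1)}$. Multiplying by $R_1(y)$ and invoking the type formula $\tau=s-\max\{0,2-N\}$, which raises $\tau$ by $1$ as $N$ increases from $0$ to $1$, the product $R_1\lra_{(0,1)}$ is a type-$1$ kernel, i.e.\ an $\lra_1$. Hence
\begin{equation*}
\mdbar_x R_1\lrp_0^{\ast}=\mdbar_x R_1\lrp_0-\mdbar_x\lra_1,
\end{equation*}
so the lemma reduces to showing $\mdbar_x R_1\lrp_0=\mdbar_x\lra_1'+\lrs_\infty$ for some type-$1$ kernel $\lra_1'$.

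Since $R_1$ is independent of $x$, it is enough to compute $\mdbar_x\lrp_0$. With $\lrp_0=\ast_x\partial_x\overline{K}$, $K=(2\pi i)^{-n}\alpha\wedge(\mdbar_x\alpha)^{n-1}$, and $\alpha=\xi\partial r/\phi$, I would expand $\mdbar_x\alpha=\mdbar\partial r/\phi-\partial r\wedge\mdbar_x\phi/\phi^2$; the $\mdbar\xi$ piece is supported off the diagonal and so is immediately of $\lrs_\infty$ type. Because $\partial r\wedge\partial r=0$, only two terms survive in the binomial expansion of $(\mdbar_x\alpha)^{n-1}$, and after conjugating and successively applying $\partial_x$, $\ast_x$, and $\mdbar_x$, a finite sum of admissible kernels of the form (\ref{typerep}) emerges. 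Terms in which $\mdbar_x$ falls on a smooth factor or on $\overline{\phi}$ are absorbed, together with the $R_1(y)$ factor, into an $\mdbar_x\lra_1'$. Terms in which $\mdbar_x$ raises a $\phi^{-n}$-pole to $\phi^{-n-1}$ are handled exactly as in (\ref{improvement}): using Lemma \ref{fundphi}(iv) to write $\phi-\overline{\phi}^{\ast}=\lre_3$ and swap one $\phi$ for $\overline{\phi}^{\ast}$ modulo an $\lre_3$ correction produces an explicit $r^{\ast}$-factor, after which the resulting kernel satisfies the $L^{\lambda}$ and $(\gamma T)^k$-smoothing bounds defining $\lrs_\infty$.

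The main obstacle is this second classification: once $\mdbar_x$ produces $\mdbar_x\phi/\phi^{n+1}$, I have to verify that combining the $R_1(y)$ factor, the $r^{\ast}$ gained from (\ref{improvement}), and the tangential-derivative structure $(\gamma T)^k$ yields finite $L^{\lambda}$ norms for every $\lambda>0$ together with tangential smoothness of all orders. This is precisely the additional type-$(0,1)$ accuracy obtained in the proof of Theorem \ref{basic0} via the careful treatment of (\ref{improvement}), and it is the reason the improvement runs all the way to $\lrs_\infty$ rather than stopping at an $\lra_1$.
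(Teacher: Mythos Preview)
Your reduction via Proposition \ref{cancel0} is fine up to the point where you are left with $\mdbar_x R_1\lrp_0$, but the subsequent analysis of $\mdbar_x\lrp_0$ misses the actual mechanism and does not reach $\lrs_\infty$. The paper does \emph{not} pass back to $\lrp_0$; it works directly with the explicit form of the adjoint kernel,
\[
\lrp_0^{\ast}(x,y)=R_2(y)\frac{l(y)}{\phi(y,x)^{n+1}}+\lra_{(0,1)},
\]
and the entire proof rests on a single structural fact: $\phi(y,x)$ is \emph{holomorphic in $x$} near the diagonal, because $F(y,\cdot)$ is a polynomial in its second argument and $\varphi\equiv 1$ there. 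Consequently $\mdbar_x\big(R_3(y)l(y)\phi(y,x)^{-n-1}\big)$ is supported away from the diagonal, hence is automatically an $\lrs_\infty$ kernel. Multiplying the $\lra_{(0,1)}$ piece by $R_1$ gives the $\mdbar_x\lra_1$ term. That is the whole argument.

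By swapping back to $\lrp_0(x,y)$ you lose exactly this holomorphy: $\lrp_0$ carries $\overline{\phi(x,y)}$ in the denominator, and $\mdbar_x\overline{\phi(x,y)}$ does not vanish. Your proposed remedy, the ``improvement'' of (\ref{improvement}) together with Lemma \ref{fundphi}(iv), cannot close this gap. First, Lemma \ref{fundphi}(iv) says $\phi-\phi^{\ast}=\lre_3$, not $\phi-\overline{\phi}^{\ast}=\lre_3$ as you wrote. Second, and more fundamentally, the improvement in (\ref{improvement}) is a one-step gain to double type $(0,1)$; it does not iterate, and there is no way to bootstrap a finite type gain into the $L^{\lambda}$-for-all-$\lambda$ integrability and $(\gamma T)^k$-smoothness that define $\lrs_\infty$. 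Only the exact vanishing coming from holomorphy in $x$ produces that. In short, the detour through $\lrp_0$ discards the one piece of structure that makes the lemma true.
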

\begin{proof}
$\lrp_0^{\ast}$ has the form
\begin{equation*}
\lrp_0(x,y)^{\ast}=R_2(y)\frac{l(y)}{\phi(y,x)^{n+1}}+\lra_{(0,1)},
\end{equation*}
where $l(y)$ is smooth and real-valued, and we use
$R_1(y)=R_1(x)+\lre_1$.  Thus
\begin{align*}
\mdbar_{x}R_1\lrp_0^{\ast}
 &=\mdbar_{x}\left(
  R_{3}(y)\frac{l(y)}{\phi(y,x)^{n+1}}\right)
  +\mdbar_{x}\lra_{1}\\
 &=\mdbar_{x}\lra_{1}+\lrs_{\infty},
\end{align*}
since
\begin{equation*}
\mdbar_{x}\left(
  R_{3}(y)\frac{l(y)}{\phi(y,x)^{n+1}}\right)=\lrs_{\infty},
\end{equation*}
due to the fact that $\phi(y,x)$ is holomorphic in
 the $x$ variable near $x=y$.
\end{proof}

We let $N$ be the \dbar-Neumann operator on $(0,1)$-forms, and we
make use of the relation
\begin{equation*}
B=I-\mdbar^{\ast}N\mdbar
\end{equation*}
to obtain
\begin{cor}
 \label{dbarp}
\begin{equation*}
{\mathbf P}_0^{\ast} R_1(f- Bf) = A_{1} (f-Bf) + {\mathbf
S}_{\infty} N \mdbar f.
\end{equation*}
\end{cor}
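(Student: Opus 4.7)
The plan is to combine the preceding lemma with the Bergman projection identity $f-Bf=\mdbar^{\ast}N\mdbar f$ coming from $B=I-\mdbar^{\ast}N\mdbar$. From the proof of the lemma I would extract the decomposition
\[
\mathbf{P}_0^{\ast} R_1 = A_1 + \mathbf{E},
\]
where $\mathbf{E}$ denotes the operator with kernel $\mathcal{E}(x,y)=R_3(y)l(y)/\phi(y,x)^{n+1}$, the specific singular piece that the lemma isolates. Because $A_1(f-Bf)$ reproduces the first term on the right of the corollary, everything reduces to identifying $\mathbf{E}(f-Bf)$ as a function of the form $\mathbf{S}_\infty N\mdbar f$.

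Substituting $f-Bf=\mdbar^{\ast}N\mdbar f$ reformulates the remaining claim as the operator identity $\mathbf{E}\mdbar^{\ast}=\mathbf{S}_\infty$ on $(0,1)$-forms in the range of $N$. Two facts drive this: (i) the kernel $\mathcal{E}$ is holomorphic in $x$ near the diagonal, since $\phi(y,x)$ is, which is the content of $\mdbar_x\mathcal{E}=\lrs_\infty$ in the lemma; and (ii) a direct computation with $F(y,x)$ shows that the holomorphic derivative $\partial_y\phi(y,x)$ vanishes to second order at the diagonal — the linear term $(\partial r/\partial\zeta_j)(y)$ of the Levi polynomial cancels $\partial r(y)/\partial y_j$. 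Hence an integration by parts that moves $\mdbar^{\ast}$ off $N\mdbar f$ and onto $\mathcal{E}$ already improves the singularity of the kernel, with no boundary term appearing because $N\mdbar f$ satisfies the Neumann condition. Combining this gain with the subelliptic regularity of $N$ on the strictly pseudoconvex approximants — or, equivalently, dualizing to $(\mathbf{E}\mdbar^{\ast})^{\ast}=\mdbar\,\mathbf{E}^{\ast}$, whose adjoint kernel is anti-holomorphic in $y$ near the diagonal — and exploiting $\phi-\phi^{\ast}=\lre_3$ from Lemma~\ref{fundphi} to transfer holomorphy between the two variables yields the desired identification with an $\lrs_\infty$ kernel.

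The main obstacle is precisely this variable mismatch: the ``good'' variable for $\mathcal{E}$ is $x$, but $\mdbar^{\ast}$ differentiates in $y$, so neither the lemma nor a single integration by parts produces a smoothing kernel on its own. The smoothing must be assembled from several ingredients — the second-order vanishing of $\partial_y\phi$, the $\lre_3$-cancellation $\phi-\phi^{\ast}=\lre_3$, and the regularity of $N\mdbar f$ — and one must check that the composite kernel of $\mathbf{E}\mdbar^{\ast}N$ genuinely satisfies the decay and smoothness conditions of the $\lrs_\infty$ class defined at the start of this section.
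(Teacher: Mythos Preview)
Your basic strategy is right and is essentially what the paper does, but you have manufactured an obstacle that is not there. When $\mathbf{P}_0^{\ast}$ (or your $\mathbf{E}$) acts on a function, the integration variable is the \emph{first} variable of the kernel --- in the paper's convention this is $x=\zeta$ --- and the output is a function of $y$. Hence $\mdbar^{\ast}$, which hits $N\mdbar f$ in the integration variable, becomes $\mdbar_{x}$ on the kernel after integration by parts, not $\mdbar_{y}$. There is no variable mismatch: the ``good'' variable for $\mathcal{E}$ is $x$, and $\mdbar^{\ast}$ transfers precisely to $\mdbar_{x}$. Your fact~(i), $\mdbar_{x}\mathcal{E}=\lrs_{\infty}$, is therefore already the whole story; fact~(ii), the $\phi-\phi^{\ast}=\lre_{3}$ cancellation, the subelliptic regularity of $N$, and the dualization argument are all superfluous.

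The paper's proof is simply a cleaner packaging of the same move. Rather than first decomposing the kernel as $R_{1}\lrp_{0}^{\ast}=\lra_{1}+\mathcal{E}$ and then integrating by parts on the $\mathcal{E}$ piece, the paper integrates by parts \emph{first} on the whole expression,
\[
(f-Bf,R_{1}\lrp_{0}^{\ast})=(\mdbar^{\ast}N\mdbar f,R_{1}\lrp_{0}^{\ast})=(N\mdbar f,\mdbar_{\zeta}R_{1}\lrp_{0}^{\ast}),
\]
applies the preceding lemma verbatim ($\mdbar_{\zeta}R_{1}\lrp_{0}^{\ast}=\mdbar_{\zeta}\lra_{1}+\lrs_{\infty}$), and then integrates by parts \emph{back} on the $\mdbar_{\zeta}\lra_{1}$ term to recover $(\mdbar^{\ast}N\mdbar f,\lra_{1})=A_{1}(f-Bf)$. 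This avoids ever having to name $\mathbf{E}$ separately and uses the lemma exactly as stated rather than re-deriving its ingredients.
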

\begin{proof}
\begin{align*}
{\mathbf P}_0^{\ast} R_1(f-Bf) &= (f-Bf,R_1\lrp_0^{\ast})\\
&=(\mdbar^{\ast}N\mdbar f,R_1\lrp_0^{\ast})\\
&=(N\mdbar f,\mdbar_{\zeta}R_1\lrp_0^{\ast})\\
&=(N\mdbar f,\mdbar_{\zeta}\lra_{1}+\lrs_{\infty})\\
&=(\mdbar^{\ast}N\mdbar f,\lra_{1})+ {\mathbf S}_{\infty} N \mdbar
f.
\end{align*}
\end{proof}

\section{Estimates}

We apply Theorem \ref{basic0} to $f-Bf$.
\begin{equation*}
f-Bf={\mathbf T}_0\mdbar f + {\mathbf P}_0 (f-Bf) +
(A_{(0,2)}+E_{2-2n})\mdbar f + (A_{(0,1)} +E_{1-2n})(f-Bf),
\end{equation*}
and then we use Proposition \ref{cancel0} to obtain
\begin{equation*}
 f-Bf={\mathbf T}_0\mdbar f + {\mathbf P}_0^{\ast} (f-Bf) +
(A_{(0,2)}+E_{2-2n})\mdbar f + (A_{(0,1)} +E_{1-2n})(f-Bf).
\end{equation*}

Then after applying Corollary \ref{dbarp}, we write
\begin{thrm}
\begin{equation}
\label{fbf}
 f-Bf=({\mathbf T}_0+A_{(0,2)}+E_{2-2n})\mdbar f + {\mathbf
S}_{\infty} N\mdbar f+( A_{(0,1)} +E_{1-2n})(f-Bf).
\end{equation}
\end{thrm}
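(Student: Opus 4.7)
The plan is to chain together three elements already established in the excerpt: the basic integral representation of Theorem~\ref{basic0}, the symmetrization ${\mathbf P}_0\equiv{\mathbf P}_0^{\ast}\pmod{A_{(0,1)}}$ furnished by Proposition~\ref{cancel0}, and the regularization of ${\mathbf P}_0^{\ast}$ supplied by Corollary~\ref{dbarp}. All three are algebraic operator identities, so the claim should follow by substitution and by collecting the spawned terms into the error classes already in play.

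Concretely, I first apply Theorem~\ref{basic0} to $f-Bf\in L^2(D)\cap\operatorname{Dom}\mdbar$, using $\mdbar(f-Bf)=\mdbar f$ (since $Bf$ is holomorphic) to produce
\[
f-Bf={\mathbf T}_0\mdbar f+{\mathbf P}_0(f-Bf)+(A_{(0,2)}+E_{2-2n})\mdbar f+(A_{(0,1)}+E_{1-2n})(f-Bf).
\]
Next, Proposition~\ref{cancel0} lets me replace ${\mathbf P}_0(f-Bf)$ by ${\mathbf P}_0^{\ast}(f-Bf)+A_{(0,1)}(f-Bf)$, and the spawned $A_{(0,1)}$ summand is absorbed into the existing $(A_{(0,1)}+E_{1-2n})(f-Bf)$ error. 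Finally, Corollary~\ref{dbarp} trades ${\mathbf P}_0^{\ast}(f-Bf)$ for $A_1(f-Bf)+{\mathbf S}_{\infty}N\mdbar f$; since type-$1$ operators are strictly better than the type-$0$ threshold of $A_{(0,1)}$, the $A_1(f-Bf)$ piece folds into the accumulated error term, while ${\mathbf S}_{\infty}N\mdbar f$ is displayed on its own and the remaining $\mdbar f$-operators consolidate into $({\mathbf T}_0+A_{(0,2)}+E_{2-2n})\mdbar f$, yielding \eqref{fbf}.

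The main delicacy lies in invoking Corollary~\ref{dbarp}: its stated left-hand side is ${\mathbf P}_0^{\ast}R_1(f-Bf)$ rather than ${\mathbf P}_0^{\ast}(f-Bf)$, so one has to justify absorbing the extraneous $R_1$. Since the leading singular term of $\lrp_0^{\ast}$ (from the lemma preceding the corollary) has the form $R_2(y)\,l(y)/\phi(y,x)^{n+1}$ plus an $\lra_{(0,1)}$ remainder, one $R_1(y)$ factor can be shifted from the kernel onto the integrand under the integral sign, matching the hypothesis of the corollary; the $\lra_{(0,1)}$ residue falls into the $A_{(0,1)}(f-Bf)$ error class. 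Beyond this piece of bookkeeping, no analytic estimates are needed — the statement is a purely algebraic assembly of the three preceding results into the compact form \eqref{fbf}.
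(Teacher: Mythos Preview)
Your three-step assembly---Theorem~\ref{basic0} applied to $f-Bf$, then Proposition~\ref{cancel0}, then Corollary~\ref{dbarp}---is precisely the paper's own derivation (the two displayed equations immediately preceding the theorem, followed by the one-line ``after applying Corollary~\ref{dbarp}''). Your flag on the stray $R_1$ in Corollary~\ref{dbarp} is well taken, since the paper glosses over it; the cleaner resolution than ``shifting $R_1$ from the kernel onto the integrand'' (which does not quite parse, as $R_1(y)$ depends on the output variable, not the integration variable) is simply to note that the proof of Corollary~\ref{dbarp} runs verbatim without the extra $R_1$: the leading term $R_2(y)\,l(y)/\phi(y,x)^{n+1}$ of $\lrp_0^{\ast}$ is still annihilated by $\mdbar_x$ near the diagonal (giving $\lrs_{\infty}$), and the $\lra_{(0,1)}$ remainder---no longer promoted to $\lra_1$ by an extra $R_1$---yields $A_{(0,1)}(f-Bf)$, which is exactly the error class appearing in \eqref{fbf}.
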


We introduce the notion of $Z$ operators.  We denote by $Z_1$
those operators which are of a type $\ge 1$ and $\epsilon=0$ (that
is, we consider operators on spaces of the domain, $D$) or are
$E_{1-2n}\circ R_1$ and also the sum of such operators.
 We further include in our notation of
$Z_1$ those operators whose kernels conform to our definition of
admissible with the factors, $R_1$ being replaced by a factor of
$\gamma$. Thus, for instance, a kernel of smooth type 1 multiplied
by $\gamma^2$ will be a $Z_1$ operator.  $E_{1-2n}\circ\gamma$
will also be a $Z_1$ operator.

Let $\Z_1(x,y)$ be a kernel of a $Z_1$ operator.  We define $Z_j$
operators to be those operators which have kernels of the form
$\Z_1(x,y)\lre_{j-1}(x,y)$.
 We establish
mapping properties for $Z_j$ operators.

   For the proof of the properties of $Z_j$ operators,
we will refer to a lemma of Schmalz (see the proof of Lemma 3.2 in
\cite{Sc89}) which provides a useful coordinate system in which to
prove estimates.
\begin{lemma}
 \label{schmalzlem}
Let $E_{\delta}(y):=\{x\in D: \rho(x,y) < \delta \gamma(y)\}$ for
$\delta>0$.  Then there is a constant $c$ and a number, $m\in
\{1,\ldots,2n\}$ such that for all $y\in D$, there are local
coordinates around each $x\in E_c(y)$ given by $\zeta_j =
t_j+it_{j+n}$ for $1\le j\le n$, and
 \begin{equation*}
 \{
-r(\zeta),t_1,\ldots,_{\hat{m}}\ldots,t_{2n}\},
\end{equation*}
where $t_m$ is omitted, forms a coordinate system near $x$. We
have the estimate
\begin{equation}
\label{volest}
 dV \lesssim \frac{1}{\gamma(z)}\left|
dr(\zeta)\wedge dt_1\wedge \ldots_{\hat{m}}\ldots\wedge
dt_{2n}\right|,
\end{equation}
where $dV$ is the Euclidean volume form on $\mathbb{R}^{2n}$.
\end{lemma}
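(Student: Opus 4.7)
The plan is to exploit the identity $\gamma(y) = |\partial r(y)|$ to locate a real coordinate direction in which $r$ varies at a controlled rate relative to $\gamma(y)$, and then use that direction to replace the corresponding real coordinate by $-r$. Since $|\partial r(y)|$ is comparable to the Euclidean norm of the real gradient $\left(\partial r/\partial t_1(y), \ldots, \partial r/\partial t_{2n}(y)\right)$, with constants depending only on uniform bounds for the metric coefficients $g_{jk}$ on $\overline{D}$, there must exist an index $m = m(y) \in \{1, \ldots, 2n\}$ for which $|\partial r/\partial t_m(y)| \gtrsim \gamma(y)$.

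First I would fix such an $m$ and invoke Lipschitz continuity of $\partial r/\partial t_m$ on $\overline{D}$, which holds uniformly because $r$ is smooth up to the boundary. For $x \in E_c(y)$ one has $\rho(x,y) < c\,\gamma(y)$, so the oscillation of $\partial r/\partial t_m$ between $x$ and $y$ is bounded by a constant multiple of $c\,\gamma(y)$; choosing $c$ sufficiently small (depending on the Lipschitz constant and the implicit constant from the first step) guarantees $|\partial r/\partial t_m(x)| \gtrsim \gamma(y)$ throughout $E_c(y)$. A parallel Lipschitz argument applied to $\gamma$ itself shows that $\gamma(x)$ and $\gamma(y)$ are comparable on the same set. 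The inverse function theorem then certifies that $(-r, t_1, \ldots, t_{m-1}, t_{m+1}, \ldots, t_{2n})$ is a smooth chart on $E_c(y)$, with Jacobian equal to $\pm\,\partial r/\partial t_m$. The change-of-variables formula therefore yields
\begin{equation*}
|dt_1 \wedge \cdots \wedge dt_{2n}| = \frac{1}{|\partial r/\partial t_m|}\, |dr \wedge dt_1 \wedge \cdots \wedge dt_{m-1} \wedge dt_{m+1} \wedge \cdots \wedge dt_{2n}|,
\end{equation*}
and the lower bound on $|\partial r/\partial t_m|$ produces the desired estimate (\ref{volest}), with the factor $\gamma(z)$ in the denominator absorbed into $\gamma(y)$ up to bounded ratios across the patch.

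The main obstacle is ensuring that $c$ can be chosen uniformly in $y \in D$, particularly near the critical points of $r$ where $\gamma \to 0$. The Lipschitz constants for $\partial r/\partial t_j$ and for $\gamma$ are fixed quantities attached to the unscaled geometry of $\overline{D}$, while the defining scale of the patch $E_c(y)$ is itself $\gamma(y)$; the argument succeeds because the ratio of the oscillation to $\gamma(y)$ is $\lesssim c$, independent of $y$. A minor subtlety is that both the index $m$ and the chart vary with $y$, so the lemma should be read as providing a family of adapted charts rather than a single global coordinate system. With these caveats, the remainder is a routine application of the inverse function theorem together with the standard volume-form transformation rule.
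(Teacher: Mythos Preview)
The paper does not give its own proof of this lemma; it is quoted from Schmalz \cite{Sc89} (the proof of Lemma~3.2 there). Your argument is correct and is essentially the standard one: choose the index $m=m(y)$ for which $|\partial r/\partial t_m(y)|$ is maximal, hence $\gtrsim\gamma(y)$, use the uniform Lipschitz bound on $\nabla r$ to propagate this lower bound over the ball $E_c(y)$ of radius $c\gamma(y)$, and read off the Jacobian of the substitution $t_m\mapsto -r$. Your remark that $m$ (and the chart) must be allowed to depend on $y$, despite the quantifier placement in the statement, is also correct.
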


\begin{lemma}
\label{weightedz}
 For $2\le p<\infty$ and $1\le j\le n+1$
\begin{equation}
\label{jnorm}
 \| Z_j f\|_{L^p}\lesssim \|  f\|_{L^{p,jp}}.
\end{equation}
Let $0<\epsilon'<\epsilon$.
 \begin{equation}
\label{inftynorm}
 \| Z_j f\|_{L^{\infty,\epsilon,0}}\lesssim \|
f\|_{L^{\infty,j+\epsilon',0}}.
\end{equation}
\end{lemma}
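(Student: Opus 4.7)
The plan is to reduce both parts of the lemma to two pointwise integrability estimates on the kernel $\Z_j(x,y) = \Z_1(x,y)\lre_{j-1}(x,y)$, obtained by means of Schmalz's coordinates, and then to combine them with the Schur test for part \eqref{jnorm} and with a direct pointwise bound for part \eqref{inftynorm}.

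First I would derive a uniform pointwise size estimate for a $Z_1$-kernel. By the definition of type together with Lemma \ref{fundphi}, every term of the form \eqref{typerep} that qualifies as a $Z_1$-kernel admits a bound of the schematic form
\begin{equation*}
|\Z_1(x,y)| \lesssim \frac{\gamma^2(x)+\gamma^2(y)}{\rho^{2n-1}(x,y)}+(\text{lower-order terms})
\end{equation*}
near the diagonal, the factor $\gamma^2$ recording the two $R_1$ (or $\gamma$) factors needed to upgrade a smooth-type-$1$ kernel to type $1$, and the $\rho^{-(2n-1)}$ representing the sharp singularity allowed by type $1$. Multiplying by $|\lre_{j-1}(x,y)| \lesssim \rho^{j-1}(x,y)$ then controls $|\Z_j(x,y)|$ by an expression with $\rho$-singularity of order $2n-j$.

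Next I would establish the two key integrals
\begin{equation*}
\int_D |\Z_j(x,y)|\,dV(x) \lesssim \gamma^j(y), \qquad \int_D |\Z_j(x,y)|\,dV(y) \lesssim \gamma^j(x),
\end{equation*}
valid in the range $1 \le j \le n+1$. Each follows by splitting the domain of integration into the Schmalz ball $E_c(y)$ (respectively $E_c(x)$) and its complement. Outside the ball, $\rho \gtrsim \gamma$ and the kernel is integrable with room to spare. Inside, I would pass to the Schmalz coordinates of Lemma \ref{schmalzlem}, in which $\gamma$ is essentially constant, comparable to $\gamma(y)$, and the volume estimate \eqref{volest} contributes an extra $\gamma^{-1}(y)$. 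The remaining integral reduces to that of a fractional power of the Euclidean distance over a ball of radius $\sim \gamma(y)$ in $\mathbb{R}^{2n}$, whose value is an explicit power of $\gamma(y)$; tracking the exponents shows that $\gamma^j(y)$ is achieved exactly at the borderline $j = n+1$. The symmetric estimate follows by the same argument, using $\phi - \phi^{\ast} = \lre_3$ from Lemma \ref{fundphi} to interchange the roles of $x$ and $y$.

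For \eqref{jnorm} I would apply the Schur test to the operator $g \mapsto \int \Z_j(x,y)\gamma^{-j}(y)g(y)\,dV(y)$: absorbing the $\gamma^{-j}(y)$ into the right-hand side of the integrability estimates yields bounded kernel integrals in both variables, giving $L^1 \to L^1$ and $L^\infty \to L^\infty$ mapping; interpolation delivers $L^p \to L^p$ for every $p$, in particular $2 \le p < \infty$, and the substitution $f = \gamma^{-j}\cdot \gamma^j f$ produces \eqref{jnorm}. For \eqref{inftynorm}, normalising $\|f\|_{L^{\infty,j+\epsilon',0}} = 1$ and estimating
\begin{equation*}
\gamma^\epsilon(x)|Z_j f(x)| \le \gamma^\epsilon(x)\int |\Z_j(x,y)|\,\gamma^{-(j+\epsilon')}(y)\,dV(y),
\end{equation*}
I would split the $y$-integration as above: on $E_c(x)$ one has $\gamma(y) \sim \gamma(x)$, so the weight may be pulled out and the integrability estimate leaves a factor $\gamma^{\epsilon-\epsilon'}(x) \lesssim 1$ (using $\epsilon' < \epsilon$), while outside $E_c(x)$ the kernel already decays sufficiently to absorb the $\gamma^{-(j+\epsilon')}$ weight against $dV(y)$.

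The main obstacle will be the precise kernel bookkeeping needed in the near-diagonal computation: extracting exactly $\gamma^j(y)$ (rather than a fractional deficit) requires a careful case analysis over the various admissible forms \eqref{typerep}, and at the upper endpoint $j = n+1$ the integral is borderline, so the full strength of the Schmalz volume bound \eqref{volest} together with the full $\rho^{j-1}$-smoothing from $\lre_{j-1}$ must be used simultaneously.
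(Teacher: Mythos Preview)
Your approach has a genuine gap at the step you call the ``two key integrals.'' The claim
\[
\int_D |\Z_j(x,y)|\,dV(x)\ \lesssim\ \gamma^j(y)
\]
cannot hold in general. A $Z_1$-kernel of double type $(1,1)$ carries at most an $R_2$ (or $\gamma^2$) factor, so your own schematic bound gives at best $|\Z_j|\lesssim (\gamma^2(x)+\gamma^2(y))\,\rho^{-(2n-j)}$. Integrating this over $D\setminus E_c(y)$, where $\rho\gtrsim\gamma(y)$ but $\gamma(x)$ may be bounded away from zero, produces a contribution of order $1$, not $\gamma^j(y)$. The factor $\lre_{j-1}$ contributes $\rho^{j-1}$-smoothing, not $\gamma^{j-1}$-vanishing, so it cannot supply the missing powers for $j\ge 3$. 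As a consequence the plain Schur test on the kernel $\Z_j(x,y)\gamma^{-j}(y)$ fails: its $dV(y)$-integral is not uniformly bounded in $x$.

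The paper's route is organised differently in two essential respects. First, the weight $\gamma^{-j}$ is placed on the \emph{integration} variable, and one proves the single weighted estimate
\[
\sup_{y}\int_D \gamma^{-j}(x)\,|\Z_j(x,y)|\,|r(x)|^{-\delta}|r(y)|^{\delta}\,dV(x)<\infty,\qquad 0<\delta<\tfrac12,
\]
after which \eqref{jnorm} follows from the generalised Young (Schur) inequality with the $|r|^{\pm\delta}$ weight rather than a plain Schur test. Second, the argument never collapses the kernel to an isotropic $\rho^{-(2n-1)}$ bound: the anisotropic lower bound $|\phi|\gtrsim |r(x)|+|r(y)|+\rho^2$ is used in full, and it is exactly what produces the factor $|r(y)|^{-\delta}$ that the weight then cancels. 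Inside $E_c(y)$ the relation $\rho\lesssim\gamma(x)$ converts $\gamma^{-j}(x)\rho^{j-1}$ into a single $\gamma^{-1}(x)$, which the Schmalz Jacobian \eqref{volest} absorbs; outside $E_c(y)$ but inside a Morse neighbourhood one needs the Morse coordinates \eqref{rcoor} and the inequality $|w(\zeta)|\lesssim|\zeta-z|$ to obtain the same reduction --- a step with no analogue in your outline. The estimate \eqref{inftynorm} is then handled by an analogous computation, not by the bare $\int|\Z_j|\,dV\lesssim\gamma^j$ bound.
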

\begin{proof}
 We prove (\ref{jnorm}) for kernels of the form
 $\lra_{(1,1)}(x,y)\lre_{j-1}$.
 We show below that $\lra_{(1,1)}(x,y)$ satisfies
\begin{equation}
\label{aeproperty}
 \sup_{y\in\Omega}\int \frac{1}{\gamma^j(x)}
 |\lra_{(1,1)}(x,y)\lre_{j-1}(x,y)|
 |r(x)|^{-\delta}|r(y)|^{\delta}dV(x) <\infty
\end{equation}
for $\delta<1/2$.  The lemma then follows from the generalized
Young's inequality.

We further restrict our proof to the cases in which $\lra_{(1,1)}$
satisfies
\begin{align*}
& i)\ \frac{1}{\gamma(x)}|\lra_{(1,1)}| \lesssim
\gamma(x)\frac{1}{P^{n-1/2-\mu}|\phi|^{\mu+1}}
\qquad \mu\ge 1 \\
& ii) \frac{1}{\gamma(x)}|\lra_{(1,1)}| \lesssim
\frac{1}{P^{n-1-\mu}|\phi|^{\mu+1}} \qquad \mu\ge 1\\
& iii) \frac{1}{\gamma(x)}|\lra_{(1,1)}| \lesssim
\frac{1}{\gamma(x)}\frac{1}{P^{n-3/2-\mu}|\phi|^{\mu+1}} \qquad
\mu\ge 1.
\end{align*}
We will prove the more difficult case $iii)$, as cases $i)$ and
$ii)$ follow similar arguments, and we leave the details of those
cases to the reader.

We denote the critical points of $r$ by $p_1,\ldots,p_k$, and take
$\varepsilon$ small enough so that in each
\begin{equation*}
U_{2\varepsilon}(p_j)=\{x: D\cap \rho(x,p_j)<2\varepsilon\},
\end{equation*}
for $j=1,\ldots,k$,
 there are coordinates $u_{j_1},\ldots,u_{j_m},v_{j_{m+1}},\ldots,v_{j_{2n}}$ such
 that
 \begin{equation}
 \label{rcoor}
 -r(x)=u_{j_1}^2+\cdots+u_{j_m}^2-v_{j_{m+1}}^2-\cdots - v_{j_{2n}}^2,
 \end{equation}
with $u_{j_{\alpha}}(p_j)=v_{j_{\beta}}(p_j)=0$ for all $1\le
\alpha\le m$ and $m+1\le\beta\le 2n$, from the Morse Lemma.

 We
divide the estimates into subcases depending on whether $y\in
U_{\varepsilon}$.

Subcase $a)$.  Suppose $y\in U_{\varepsilon}(p_j)$.  We estimate
\begin{equation}
\label{zUjep} \int_{U_{2\varepsilon}(p_j)}
\frac{\rho^{j-1}(x,y)}{\gamma^j(x)|\phi|^{\mu+1}P^{n-3/2-\mu}|r(x)|^{\delta}}
dV(x)
\end{equation}
and
\begin{equation}
\label{zDminus}
 \int_{D_{\epsilon}\setminus U_{2\varepsilon}}
\frac{\rho^{j-1}(x,y)}{\gamma^j(x)|\phi|^{\mu+1}P^{n-3/2-\mu}|r(x)|^{\delta}}
dV(x).
\end{equation}

We break up the integral in (\ref{zUjep}) into integrals over
$E_c(y)$ and its complement, where $c$ is as in Lemma
\ref{schmalzlem}, and we choose $c$ small enough so that, in
$E_c(y)$, we have $\rho(x,y)\lesssim \gamma(x)$.

In the case $U_{2\varepsilon}(p_j)\cap E_c(y)$, we use a
coordinate system, $s=-r(\zeta)$, $t_1,\ldots,t_{2n-1}$, and
estimate
\begin{align}
\label{z1stcoor}
 \int_{ U_{2\varepsilon}(p_j)\cap E_c(y)}
 &\frac{1}{\gamma(x)|\phi|^{\mu+1}P^{n-3/2-\mu}|r(x)|^{\delta}}
dV(x)\\
\nonumber
 &\qquad
 \lesssim
 \int_{\mathbb{R}_+^2}\frac{t^{2n-2}}{\gamma(\zeta)\gamma(z)s^{\delta}(\theta+s+t^2)^{\mu+1}(s+t)^{2n-3-2\mu}}dsdt\\
 \nonumber
&\qquad
 \lesssim
 \int_{\mathbb{R}_+^2}\frac{t^{2\mu-1}}{s^{\delta}(\theta+s+t^2)^{\mu+1}}dsdt\\
\nonumber
 &\qquad\lesssim
 \int_{\mathbb{R}_+^2} \frac{1}{s^{\delta}(\theta^{1/2}+s^{1/2}+t)^{3}} dsdt\\
 \nonumber
 &\qquad\lesssim
 \int_0^{\infty} \frac{1}{s^{\delta}(\theta+s)} dsdt\\
 \nonumber
 &\qquad\lesssim \frac{1}{\theta^{\delta}},
\end{align}
where we use the notation
 $\mathbb{R}_+^j=\overbrace{\mathbb{R}_+\times\cdots\times\mathbb{R}_+}^{j\mbox{ times}}$.

We now estimate the integral
\begin{equation}
 \label{zujepset}
\int_{ U_{2\varepsilon}(p_j)\setminus E_c(y)}
\frac{\rho^{j-1}(x,y)}{\gamma^j(x)|\phi|^{\mu+1}P^{n-3/2-\mu}|r(x)|^{\delta}}
dV(x).
\end{equation}

We can assume, without loss of generality that in
$U_{2\varepsilon}(p_j)$ there are coordinate charts $\zeta$ and
$z$ for $x$ and $y$ respectively.
 We define $w_1,\ldots,w_{2n}$
 by
 \begin{equation*}
 w_{\alpha}=\begin{cases}
 u_{j_{\alpha}}\quad \mbox{for } 1\le
\alpha\le m\\
 v_{j_{\alpha}} \quad \mbox{for } m+1\le\alpha\le 2n,
\end{cases}
\end{equation*}
  and we let $t_1,\ldots,t_{2n}$ be defined by
$\zeta_{\alpha}=t_{\alpha}+it_{n+\alpha}$.  From the Morse Lemma,
the Jacobian of the transformation from coordinates
$t_1,\ldots,t_{2n}$ to $w_1,\ldots,w_{2n}$ is bounded from below
and above and thus we have
\begin{equation*}
|\zeta-z| \simeq |w(\zeta)-w(z)|
\end{equation*}
for $\zeta$, $z\in U_{2\varepsilon}(p_j)$.

 From (\ref{rcoor}) we have
$\gamma(z)\gtrsim |w(z)|$, and thus
\begin{align*}
|w(\zeta)-w(z)| & \simeq |\zeta-z|\\
&
\gtrsim \gamma(z)\\
&\gtrsim |w(z)|\\
&\ge |w(\zeta)| -|w(\zeta)-w(z)|,
\end{align*}
and we obtain
\begin{align*}
|w(\zeta)|&\lesssim |w(\zeta)-w(z)|\\
&\simeq|\zeta-z|.
\end{align*}
 Thus, $ |w(\zeta)|\lesssim|\zeta-z| $ and $|w(\zeta)|\lesssim
\gamma(\zeta)$ for $\zeta,z\in U_{2\varepsilon}(p_j)$.

We can therefore bound the integral in (\ref{zujepset}) by
\begin{align}
\nonumber
 \int_{U_{2\varepsilon}(p_j)\setminus E_c(z)}
 &\frac{|\zeta-z|^{j-1}}{\gamma^j(\zeta)|\phi|^{\mu+1}P^{n-3/2-\mu}|r(\zeta)|^{\delta}}
dV(\zeta)\\
&\qquad\lesssim
 \nonumber
 \int_{U_{2\varepsilon}(p_j)\setminus E_c(z)}
 \frac{1}{\gamma^j(\zeta)|\phi|^{1/2}|\zeta-z|^{2n-1-j}
 |r(\zeta)|^{\delta}}
dV(\zeta)\\
\nonumber
 &\qquad\lesssim
\int_V \frac{u^{m-1}v^{2n-m-1}}{
 (u+v)^{2n-1}(\theta+u^2+v^2)^{1/2}(u^2-v^2)^{\delta}} dudv\\
 \label{casedel}
 &\qquad\lesssim
 \int_V \frac{1}{u(\theta+u^2)^{1/2}(u^2-v^2)^{\delta}} dudv,
\end{align}
where $V$ is a bounded region.  We make the substitution
$v=\tilde{v}u$, since $v^2<u^2$, and write (\ref{casedel}) as
\begin{align*}
\int_0^{M}\frac{1}{u^{2\delta}(\theta+u^2)^{1/2}}du
  \int_0^1\frac{1}{(1-\tilde{v}^2)^{\delta}}d\tilde{v}
   &\lesssim\frac{1}{\theta^{\delta}}\int_0^M
    \frac{1}{u^{2\delta}(1+u^2)^{1/2}}du\\
   &\lesssim\frac{1}{\theta^{\delta}},
\end{align*}
where $M>0$ is some constant.  We have therefore bounded
(\ref{zUjep}), and we turn now to (\ref{zDminus}).

In $D\setminus U_{2\varepsilon}$ we have that $\rho(x,y)$ and
$\gamma(x)$ are bounded from below so
\begin{equation*}
\int_{D\setminus U_{2\varepsilon}}
\frac{\rho^{j-1}(x,y)}{\gamma^j(x)|\phi|^{\mu+1}P^{n-3/2-\mu}|r(x)|^{\delta}}
dV(x)
 \lesssim \int_{D\setminus U_{2\varepsilon}}
\frac{1}{|r(x)|^{\delta}} dV(x)\lesssim 1,
\end{equation*}
the last inequality following because in $D\setminus
U_{2\varepsilon}$ $r$ can be chosen as a coordinate since
$\gamma(x)$ is bounded from below.  This finishes subcase $a)$.

Case $b)$.  Suppose $y\notin U_{\varepsilon}$.  We divide $D$ into
the regions $D\cap E_c(y)$ and $D\setminus E_c(y)$.

In $D\cap E_c(y)$ the same coordinates and estimates work here as
in establishing the estimates for the integral in
(\ref{z1stcoor}).

In $D\setminus E_c(y)$ we have $\rho(x,y)\gtrsim \gamma(y)$, but
$\gamma(y)$ is bounded from below, since $y\notin
U_{\varepsilon}$.  We therefore have to estimate
\begin{equation*}
\int_{D}\frac{1}{\gamma^j(x)|r(x)|^{\delta}}dV(x),
\end{equation*}
which is easily done by working locally with the coordinates
$w_1,\ldots,w_{2n}$ above.

(\ref{inftynorm}) is proved similarly.
\end{proof}

Applying $R_1$ to (\ref{fbf}) gives us
\begin{equation}
\label{bergmanz}
 R_1(y)(f-Bf)=Z_1 \mdbar f+{\mathbf S}_{\infty}
N\mdbar f
 +Z_1(f-Bf).
\end{equation}
 An iteration leads to
\begin{prop}
\begin{equation}
\label{gkb}
 R_k(y) (f-Bf)=Z_1\mdbar f+{\mathbf S}_{\infty} N\mdbar
f+ \sum_{j=1}^k\sum_{i_1+\cdots+i_j=k}
 Z_{i_1}\circ\cdots\circ Z_{i_j}
(f-Bf).
\end{equation}
\end{prop}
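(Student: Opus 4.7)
The plan is to proceed by induction on $k$. The base case $k=1$ is precisely (\ref{bergmanz}), so nothing needs to be shown there. For the inductive step, assume (\ref{gkb}) holds at level $k\ge 1$, and focus on a single summand
\begin{equation*}
 Z_{i_1}\circ\cdots\circ Z_{i_j}(f-Bf)
\end{equation*}
on the right-hand side. The rightmost operator $Z_{i_j}$ has, by the definition of a $Z$-type kernel, a factor of $R_1$ (or equivalently $\gamma$) sitting at its integration variable, so schematically
\begin{equation*}
 Z_{i_j}(g)(w)=\int \widetilde{\Z}_{i_j}(w,\zeta)\,R_1(\zeta)\,g(\zeta)\,dV(\zeta).
\end{equation*}
Taking $g=f-Bf$ and inserting (\ref{bergmanz}) in the variable $\zeta$, the quantity $R_1(\zeta)(f-Bf)(\zeta)$ is replaced by $Z_1\mdbar f(\zeta)+{\mathbf S}_\infty N\mdbar f(\zeta)+Z_1(f-Bf)(\zeta)$.

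Substituting and reassembling the outer compositions, each summand splits into
\begin{equation*}
 Z_{i_1}\circ\cdots\circ Z_{i_j}\bigl[Z_1\mdbar f+{\mathbf S}_\infty N\mdbar f\bigr]
 +Z_{i_1}\circ\cdots\circ Z_{i_j}\circ Z_1(f-Bf).
\end{equation*}
The first bracket can be absorbed back into the source terms of (\ref{gkb}): composition of admissible operators with types $\ge 1$ produces another operator whose action on $\mdbar f$ is of the form $Z_1\mdbar f$, while composition with ${\mathbf S}_\infty$ preserves the infinite smoothing property by Lemma~\ref{weightedz} and the generalized Young inequality already invoked for ${\mathbf S}_\infty$. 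The second term is a composition indexed by the extended tuple $(i_1,\ldots,i_j,1)$ of total length $i_1+\cdots+i_j+1=k+1$. Performing this substitution simultaneously for every summand of $R_k(y)(f-Bf)$ and matching with the identity $R_{k+1}(y)=R_k(y)\cdot R_1(y)$ yields exactly the combinatorial sum $\sum_{j=1}^{k+1}\sum_{i_1+\cdots+i_j=k+1}$ on the right.

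The main obstacle is bookkeeping rather than any new analytic content. One must verify carefully that (i) compositions of $Z$-type operators behave additively in the type index so that $Z_{i_1}\circ\cdots\circ Z_{i_j}\circ Z_1$ indeed lies in the class indexed by $(i_1,\ldots,i_j,1)$, using the kernel representation (\ref{typerep}) together with the shift identity $R_1(x)=R_1(y)+\lre_1(x,y)$ to move derivative-of-$r$ factors between source and target variables as needed; (ii) the source terms $Z_1\mdbar f$ and ${\mathbf S}_\infty N\mdbar f$ are stable under left composition with any $Z_{i_1}\circ\cdots\circ Z_{i_j}$, so that the iteration does not generate genuinely new inhomogeneous terms; and (iii) the combinatorial counting lines up, which is immediate once one observes that every iteration appends exactly one additional $Z_1$ to the rightmost slot of an existing composition. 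The mapping properties needed for the intermediate composites to be well-defined as operators on $L^p(D)$ are provided by Lemma~\ref{weightedz} and Theorems~\ref{dertype1}--\ref{E1properties}.
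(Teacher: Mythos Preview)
Your inductive scheme has a genuine gap: it never actually introduces the extra factor $R_1(y)$ needed to pass from $R_k(y)(f-Bf)$ to $R_{k+1}(y)(f-Bf)$. You propose to \emph{extract} an $R_1(\zeta)$ from inside the rightmost $Z_{i_j}$ and feed it into~(\ref{bergmanz}). But this is problematic on two counts. First, it is not true that every $Z_{i_j}$ kernel factors as $\widetilde{\Z}_{i_j}(w,\zeta)R_1(\zeta)$ with $\widetilde{\Z}_{i_j}$ still of $Z_{i_j}$-type: for admissible kernels the $R_N$ factor in the representation~(\ref{typerep}) is the mechanism that raises the type from the smooth type, so stripping an $R_1$ off lowers the type by one (and for the variants built from $\gamma$ there is no $R_1(\zeta)$ to extract at all). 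Second, and more seriously, even if such a factorization held, your substitution only rewrites the \emph{right-hand side} of the level-$k$ identity. The left-hand side remains $R_k(y)(f-Bf)$; the sentence ``matching with the identity $R_{k+1}(y)=R_k(y)\cdot R_1(y)$'' does not produce an extra $R_1(y)$ on the left.

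The paper's induction proceeds the other way around: one multiplies both sides of~(\ref{gkb}) at level $k$ by $R_1(y)$ on the \emph{outside}, so the left side becomes $R_{k+1}(y)(f-Bf)$ immediately. On the right one then uses the commutation
\[
R_1(y)\,Z_i \;=\; Z_i\circ R_1(x) \;+\; Z_{i+1},
\]
which is just $R_1(y)=R_1(x)+\lre_1(x,y)$ applied to the kernel, to push $R_1$ through the chain $Z_{i_1}\circ\cdots\circ Z_{i_j}$. This generates (i) terms with one index incremented, hence total $k+1$ with the same number $j$ of factors, and (ii) a residual $Z_{i_1}\circ\cdots\circ Z_{i_j}\circ R_1(x)(f-Bf)$, to which~(\ref{bergmanz}) is applied at the \emph{innermost} slot, producing the compositions with $j+1$ factors. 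Your points (ii) and (iii) about absorbing $Z\circ Z_1\mdbar f$ and $Z\circ{\mathbf S}_\infty$ into the source terms are correct and are exactly what is used at that stage; the missing idea is the outside-in commutation via $R_1(y)=R_1(x)+\lre_1$, not an inside-out extraction.
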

\begin{proof}
The case $k=1$ is just Equation \ref{bergmanz}.  Then, given
(\ref{gkb}) for a particular $k$, we prove the corresponding
equation $R_{k+1}(y)(f-Bf)$ by multiplying (\ref{gkb}) by a factor
of $R_1$:
\begin{align*}
R_{k+1}(y) (f-Bf) &=
 Z_1\mdbar f+{\mathbf S}_{\infty} N\mdbar f +
 \sum_{j=1}^k\sum_{i_1+\cdots+i_j=k+1}
 Z_{i_1}\circ\cdots\circ Z_{i_j}(f-Bf)\\
 &\qquad
 + \sum_{j=1}^k\sum_{i_1+\cdots+i_j=k}
 Z_{i_1}\circ\cdots\circ Z_{i_j}R_1(x) (f-Bf)\\
&=Z_1\mdbar f+{\mathbf S}_{\infty} N\mdbar f +
 \sum_{j=1}^k\sum_{i_1+\cdots+i_j=k+1}
 Z_{i_1}\circ\cdots\circ Z_{i_j}(f-Bf)\\
 &\qquad
 + \sum_{j=1}^k\sum_{i_1+\cdots+i_j=k}
 Z_{i_1}\circ\cdots\circ Z_{i_j}\circ Z_{1} (f-Bf)\\
&=Z_1\mdbar f+{\mathbf S}_{\infty} N\mdbar f+
\sum_{j=1}^{k+1}\sum_{i_1+\cdots+i_j=k+1}
 Z_{i_1}\circ\cdots\circ Z_{i_j}
(f-Bf),
\end{align*}
where we use
\begin{equation*}
R_1(y) Z_{i}= Z_{i}\circ R_1(x)+Z_{i+1}
\end{equation*}
in the first step.  We also use the commutator relations below
(see Theorem \ref{commutator}) to establish that
 $Z_j\circ{\mathbf S}_{\infty}={\mathbf S}_{\infty}$.
\end{proof}

When dealing with derivatives in order to establish the
$C^k$-estimates we will need to know how certain vector fields
commute with our $Z_j$ operators.  We make the important remark
here that in the coordinate patch of a critical point, the smooth
tangential vector fields are not smooth combinations of
derivatives with respect to the coordinate system described in
Lemma \ref{weightedz} (see \ref{rcoor}).
 To deal with this difficulty, we define
$\lre_{j,k}(x,y)$, for $j\ge 0$, for those double forms on open
sets $U\subset X\times X$ such that $\lre_{j,k}$ is smooth on $U$
and satisfies
\begin{equation*}
\lre_{j}(x,y)\lesssim  \xi(x)\rho^j(x,y),
\end{equation*}
where $\xi(x)$ is a smooth function on $U$ satisfying
\begin{equation*}
|\gamma^kD_k\xi|\lesssim 1,
\end{equation*}
for $D_k$ a $k^{th}$ order differential operator.

 In a neighborhood of a critical point, the manner in
 which we defined smooth tangential vector fields leads to
  combinations of
derivatives with respect to the coordinates of (\ref{rcoor}) with
coefficients only in $C^0(\overline{D})$ due to factors of
$\gamma$ which occur in the denominators of such coefficients.
 In general a $k^{th}$ order derivative of such
coefficients is in $\lre_{0,-k}$. Thus, when integrating by parts,
special attention has to be paid to these non-smooth terms.  We
obtain the following theorem from \cite{Eh09b}.
\begin{thrm}
\label{commutator}
 Let $X$ be a smooth tangential vector
field. Then
\begin{equation*}
\gamma^{\ast}
 X^{y}Z_1 =-Z_1\tilde{X}^{x}\gamma+Z_1^{(0)} +\sum_{\nu=1}^l
Z_1^{(\nu)}  W_{\nu}^{x}\gamma ,
\end{equation*}
where $\tilde{X}$ is the adjoint of $X$, the $Z_1^{(\nu)}$ are
also $Z_1$ operators, and          $W^{x}\in T_{x}^{1,0}(\partial
D)\oplus T_{x}^{0,1}(\partial D)$.
\end{thrm}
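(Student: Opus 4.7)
The plan is to move the $y$-derivative $X^{y}$ under the integral defining $Z_1 f$, convert it to an $x$-derivative on the kernel via the near-symmetry of the atomic building blocks of $\Z_1$, and then integrate by parts. The outer weight $\gamma^{\ast}$ is precisely what is needed to absorb the singular $1/\gamma$ factor that any tangential vector field picks up when expressed in the Morse coordinates around a critical point of $r$.

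First I would write $(Z_1 f)(y)=\int \Z_1(x,y)f(x)\,dV(x)$ and commute $X^{y}$ past the integral. Every elementary factor of $\Z_1$ --- the functions $\phi_{\epsilon}$, $\rho^{2}$, $P_{\epsilon}$, $r_{\epsilon}$, and the factors $R_1$ --- satisfies an interchange relation $F(x,y)=F(y,x)+\lre_{3}(x,y)$, the one for $\phi_{\epsilon}$ being Lemma \ref{fundphi}(iv). Therefore, up to a kernel whose type is strictly higher than that of $\Z_{1}$, the action $X^{y}\Z_{1}$ equals $\pm X^{x}\Z_{1}$ plus a term in which $X$ has fallen on an $\lre_{3}$ factor. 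Because a type-$1$ kernel multiplied by $\lre_{1}$ stays inside the $Z_{1}$ class, every such remainder is absorbable into $Z_{1}^{(0)}$.

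Next I would integrate by parts in $x$ to move the resulting $X^{x}$ onto the test function. The boundary contribution from $\partial D$ vanishes by the tangentiality of $X$ together with the vanishing of $\phi_{\epsilon}$ at $x=y\in\partial D$; the contribution from neighborhoods of critical points is handled with the Schmalz coordinates of Lemma \ref{schmalzlem} and the regularizing power supplied by the outer $\gamma^{\ast}$. Tracking the formal adjoint and the multiplication by $\gamma$ then gives the principal term $-Z_{1}\tilde{X}^{x}\gamma$.

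Finally, the sum $\sum_{\nu} Z_{1}^{(\nu)} W_{\nu}^{x}\gamma$ arises because, in the Morse coordinates around a critical point, a smooth tangential vector field has coefficients only in $C^{0}(\overline{D})$, with denominators that involve $\gamma$. Any derivative that lands on such a coefficient produces a factor of the form $(W\gamma)/\gamma^{2}$ with $W\in T^{1,0}(\partial D)\oplus T^{0,1}(\partial D)$; after the outer $\gamma^{\ast}$ consumes one power of $\gamma$, the surviving factor $W\gamma$ is exactly what appears in the statement, while the residual kernel, read under the extended definition of $Z_{1}$ (with $R_{1}$ replaced by $\gamma$ and with the $\lre_{j,k}$ class tracking derivative losses on the cutoff $\xi(x)$), is again a $Z_1$ operator. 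The main obstacle is precisely this bookkeeping of $\gamma$-powers across critical points: one must verify that each manipulation --- symmetrization, integration by parts, and Leibniz on non-smooth frame coefficients --- keeps the residual inside the relaxed $Z_1$ class. The weighted estimates of Lemma \ref{weightedz} then guarantee that these residuals share the mapping properties of genuine $Z_1$ operators, so the identity holds at the operator level after passing through $L^{p}$ or $L^{\infty,\epsilon,0}$.
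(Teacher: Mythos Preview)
The paper does not prove Theorem \ref{commutator}; it is quoted from \cite{Eh09b} (see the sentence immediately preceding the theorem: ``We obtain the following theorem from \cite{Eh09b}''). So there is no in-paper proof to compare your proposal against.

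That said, your outline is consistent with the approach the paper alludes to in the paragraph before the statement: the point is precisely to integrate by parts and to track what happens when derivatives fall on the non-smooth frame coefficients near critical points, which is why the classes $\lre_{j,k}$ are introduced just above. One caution on your symmetrization step: not all atomic factors satisfy an interchange relation with an $\lre_3$ discrepancy. Only $\phi$ does (Lemma \ref{fundphi}(iv)); $\rho^2$ and $P_\epsilon$ are exactly symmetric, while $r_\epsilon$ and $r_\epsilon^{\ast}$ are functions of a single variable and do not interchange at all. The usual route is therefore not to swap $x$ and $y$ globally, but to compute $X^{y}$ on each factor directly and use identities of the form $X^{y}\rho^2=-\widetilde{X}^{x}\rho^2+\lre_2$, $X^{y}\phi=R_1\lre_0+\lre_1$ (cf.\ the list of relations displayed in the proof of Proposition \ref{mapping}). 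This is a bookkeeping adjustment rather than a structural flaw in your plan; the integration-by-parts skeleton and the role of the outer $\gamma^{\ast}$ are as you describe.
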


We need the
\begin{prop}
 \label{mapping}
  Let $p\ge 2$, $s>p$ and $k(s)$ an integer which satisfies
  \begin{equation*}
 \frac{1}{s}>\frac{1}{p}-\frac{k(s)}{2n+2}.
\end{equation*}
There are $0<\epsilon,\epsilon'$ small enough such that
\begin{align*}
& i)\ \sum_{j=1}^{k}\sum_{i_1+\cdots+i_j=k}
Z_{i_1}\circ\cdots\circ Z_{i_j} : L^p(D)\rightarrow L^{s}(D)\\
& ii)\ \gamma T \sum_{j=1}^{n+2} \sum_{i_1+\cdots+i_j=n+2}
Z_{i_1}\circ\cdots\circ Z_{i_j}
:L^{\infty,n+2+\epsilon,0}(D)\rightarrow
L^{\infty,\epsilon',0}(D)\qquad \epsilon'<\epsilon\\
&iii)\ Z_j: L^{\infty,m+j+\epsilon,0}(D) \rightarrow
L^{\infty,m+\epsilon',0}(D)\qquad \epsilon'<\epsilon.
\end{align*}
\end{prop}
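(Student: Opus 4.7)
The plan is to treat the three mapping statements separately, reducing each to kernel-level estimates modelled on the proof of Lemma~\ref{weightedz} and invoking Theorem~\ref{commutator} for the derivative statement.

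For (iii), the kernel $\Z_1(x,y)\lre_{j-1}(x,y)$ of a $Z_j$ operator is the $Z_1$ kernel multiplied by a factor of order $\rho^{j-1}$, which in the Morse--Schmalz coordinates of Lemma~\ref{schmalzlem} compensates for $j$ additional powers of $\gamma$ via the volume estimate (\ref{volest}). Repeating the argument giving (\ref{inftynorm}) but with weights $\gamma(y)^{-(m+j+\epsilon)}$ on the input and $\gamma(x)^{m+\epsilon'}$ on the output, the extra $\rho^{j-1}$ absorbs the weight gap $\gamma^{j+\epsilon-\epsilon'}$, the residual $\epsilon-\epsilon'$ being consumed by the H\"older factor $|r|^{\pm\delta}$ in the Schur computation. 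For (i), I would first establish a one-step fractional-integration mapping $Z_i:L^{p}(D)\to L^{q_i}(D)$ with $1/q_i>1/p-i/(2n+2)$ by repeating the generalized Young argument of Theorem~\ref{a0a2} and Lemma~\ref{weightedz}: the factor $\rho^{i-1}$ is exactly what makes $\sup_y\int|\Z_1\lre_{i-1}|^{\lambda}\,dV(x)<\infty$ hold for $\lambda$ slightly above the critical exponent. Composing along a multi-index $(i_1,\dots,i_j)$ with $\sum_\ell i_\ell=k$ then telescopes the exponents into a total improvement of $k/(2n+2)$.

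For (ii), I would push $\gamma^\ast T$ into the composition by iterating Theorem~\ref{commutator}. Each application of that identity converts $\gamma^\ast X^y Z_1$ into $-Z_1\tilde X^x\gamma$ plus lower-order $Z_1^{(\nu)}$ operators carrying boundary-tangential derivative factors $W^x_\nu\gamma$. After at most $n+2$ passes the original derivative either lands on the input $f$ as a tangential derivative or is absorbed into additional $Z_1$ factors, leaving a finite sum of $Z_j$-compositions of total index at least $n+2$; part (iii) applied to each summand then yields the required bound $L^{\infty,n+2+\epsilon,0}\to L^{\infty,\epsilon',0}$.

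The hard part will be the bookkeeping near the critical points of $r$: as the authors stress by introducing the $\lre_{j,k}$ classes, derivatives of the tangential frame coefficients carry factors of $\gamma^{-k}$ and do \emph{not} remain in the smooth class $\lre_j$ under iterated differentiation. Every commutation and integration by parts must therefore be checked to confirm that the powers of $\gamma$ built into the definitions of $Z_j$ and into the statement of Theorem~\ref{commutator} actually absorb those singular coefficients, so that the resulting kernels remain admissible at every step; once this stability under commutation is in place, the three mapping properties follow by the estimates sketched above.
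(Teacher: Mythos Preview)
Your treatment of (i) and (iii) is essentially what the paper does: (iii) is indeed just a restatement of (\ref{inftynorm}) in Lemma~\ref{weightedz}, and (i) follows by composing the single-step $L^p\to L^s$ gains (the paper simply cites Theorem~\ref{dertype1}~$ii)$ and Theorem~\ref{E1properties}~$i)$ rather than redoing the Young argument, but the content is the same).

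The gap is in (ii). Your plan is to iterate Theorem~\ref{commutator} until the tangential derivative either disappears into a $Z_1^{(0)}$ term or lands on the input $f$. But the statement of (ii) is a mapping property on $L^{\infty,n+2+\epsilon,0}$, a space that carries \emph{no} derivative information on $f$; the terms in which the commuted derivative reaches $f$ are therefore not estimable by your scheme, and these terms genuinely occur (Theorem~\ref{commutator} always produces a $-Z_1\tilde X^x\gamma$ term that passes the derivative on). Saying the derivative is ``absorbed into additional $Z_1$ factors'' does not account for this.

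The paper avoids this by never letting the derivative reach $f$. It splits into two cases. If some $i_k\ge 2$, one commutes $\gamma T$ inward until it sits on $Z_{i_k}$ and then differentiates that kernel directly, obtaining the identity
\[
\gamma^\ast T\,Z_j \;=\; Z_{j-2}\circ\gamma^2 \;+\; Z_{j-1}\circ\gamma \;+\; Z_j \qquad (j\ge 2),
\]
which involves no derivative of the argument; the loss of two units of type is compensated by the explicit $\gamma^2$, and (iii) together with Theorem~\ref{a0a2} finishes. If all $i_\ell=1$, the paper instead commutes $\gamma T$ to an \emph{interior} $Z_1$ factor and stops there, invoking the mapping properties in Theorems~\ref{dertype1}~$iii)$--$iv)$ and~\ref{E1properties}~$iii)$--$iv)$: $\gamma^\ast T A_1$ (resp.\ $\gamma^\ast T E$) maps $L^{\infty,2+\epsilon,0}$ into the auxiliary space $L^{\infty,\epsilon',\delta}$ (resp.\ is controlled via $\Lambda_{\alpha,\beta}$), and the surrounding $Z_1$'s then reabsorb the $|r|^{-\delta}$ singularity. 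These $\delta$-weighted $L^\infty$ and H\"older spaces are the missing ingredient in your argument; without them, the case $i_1=\cdots=i_{n+2}=1$ cannot be closed.
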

\begin{proof}
$i)$
 We use Theorem \ref{dertype1}
$ii)$ and Theorem \ref{E1properties} $ii)$ to conclude
\begin{equation*}
Z_{i_1}\circ\cdots\circ Z_{i_j}: L^p(D)\rightarrow L^s(D),
\end{equation*}
where
\begin{equation*}
\frac{1}{s}>\frac{1}{p}-\frac{i_1+\cdots+i_j}{2n+2}.
\end{equation*}

$ii)$.  We first consider the case of $i_1=\ldots=i_{n+2}=1$.  We
use parts of Theorems \ref{dertype1} and \ref{E1properties} and
part $iii)$ of the current theorem to show the composition of
operators maps $L^{\infty,n+2+\epsilon,0}$ into
$L^{\infty,\epsilon',0}$. We can see this by using the commutator
relations and considering the two compositions
 $Z_1\circ\gamma T A_1\circ Z_1$, and $Z_1\circ\gamma T E\circ Z_1$.
From Theorems \ref{dertype1} and \ref{E1properties} we can find
$\epsilon_1,\ldots,\epsilon_4$ such that
$\epsilon_{j+1}<\epsilon_j$ and in
 the first case we have
\begin{equation*}
\|Z_1\circ \gamma T A_1\circ Z_1
f\|_{L^{\infty,\epsilon_1,0}}\lesssim\|\gamma T A_1\circ Z_1
f\|_{L^{\infty,\epsilon_2,\delta}}\lesssim
 \|Z_1f\|_{L^{\infty,2+\epsilon_3,0}}\lesssim
 \|f\|_{L^{\infty,3+\epsilon_4,0}},
\end{equation*}
and, in the second,
\begin{equation*}
\|Z_1\circ \gamma T E\circ Z_1 f\|_{L^{\infty,\epsilon_1}}\lesssim
\|\gamma T E\circ Z_1 f\|_{L^{\infty,1+\epsilon_2,0}}\lesssim
 \|Z_1f\|_{\Lambda_{\alpha,3-\epsilon_3}}\lesssim \|f\|_{L^{\infty,3+\epsilon_4,0}},
\end{equation*}
where the second and third inequalities are proved in the same way
as Theorem \ref{E1properties} $ii)$ and $iii)$.

We now consider the case in which $\max\{i_1,\ldots,i_j\}\ge2$,
and we show
\begin{equation*}
\gamma^{\ast}T Z_j:L^{\infty,j+\epsilon,0}\rightarrow
L^{\infty,\epsilon'}
\end{equation*}
for $\epsilon<\epsilon'$

For $T=T^y$ a smooth first order tangential differential operator
on $D$, with respect to the $y$ variable, we have
\begin{align*}
T\gamma(y)&\lesssim 1\\
Tr&=0\\
Tr^{\ast}&=\lre_0r\\
TP&=\lre_1+\lre_0 rr^{\ast}\\
 T\phi&=R_1\lre_{0}+\lre_1.
\end{align*}
We can therefore write for $j\ge2$
\begin{equation}
\label{-2} \gamma^{\ast}T Z_j=
 Z_{j-2}\gamma^2+Z_{j-1}\gamma+Z_j,
\end{equation}
where we use the convention that $Z_0$ refers to an operator of
type $\ge0$.

Noting that, as a result of the commutator relations, and our
 definition of $Z_j$ we also have for $j\ge2$
\begin{equation}
 \label{commutej}
\gamma^{\ast}T Z_j=Z_{j-1}\circ\gamma+Z_j+Z_j\circ\gamma T.
\end{equation}

We again use the commutator relations, (\ref{commutej}), and Lemma
\ref{weightedz} to reduce the proof to the examination of
\begin{equation*}
Z_{i_1}\circ\cdots\circ\gamma TZ_{i_k}\circ\cdots\circ Z_{i_j}
\end{equation*}
where $i_k\ge 2$.  But from (\ref{-2}) we can write
\begin{equation*}
Z_{i_1}\circ\cdots\circ
(Z_{i_k-2}\circ\gamma^2+Z_{i_k-1}\circ\gamma+Z_{i_k})
 \circ\cdots\circ Z_{i_j},
\end{equation*}
and since $i_1+\cdots+i_{k-1}+i_{k+1}+\cdots+i_j=n-k+2$, this case
follows from Theorem \ref{a0a2}, Lemma \ref{weightedz} and
property $iii)$ of the current theorem.

$iii)$.  This is just a corollary of Lemma \ref{weightedz}.
\end{proof}

We first establish the $L^p$-estimates.  The next theorem was
established in \cite{EhLi} in the case of strictly pseudoconvex
domains in $\mathbb{C}^n$, (see Theorem 5.2 of \cite{EhLi}).
\begin{thrm}
\label{bergmanlp}  For $p\ge 2$ let $k=k(p)$ satisfy
\begin{equation*}
\frac{1}{p}>\frac{1}{2}-\frac{k}{2n+2}.
\end{equation*}
Then
\begin{equation*}
\|Bf\|_{L^{p,kp}}\lesssim \|f\|_{L^p}.
\end{equation*}
\end{thrm}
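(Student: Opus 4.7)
The plan is to split $Bf = f - (f - Bf)$ and bound each piece in the weighted space $L^{p,kp}$. Since $\gamma$ is bounded on $\overline{D}$, the trivial estimate $\|f\|_{L^{p,kp}} = \|\gamma^k f\|_{L^p} \lesssim \|f\|_{L^p}$ reduces the task to controlling $\|\gamma^k (f - Bf)\|_{L^p}$.

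For the remaining piece I would invoke identity (\ref{gkb}),
\begin{equation*}
R_k(y)(f - Bf) = Z_1 \mdbar f + \mathbf{S}_\infty N \mdbar f + \sum_{j=1}^k \sum_{i_1 + \cdots + i_j = k} Z_{i_1} \circ \cdots \circ Z_{i_j}(f - Bf).
\end{equation*}
Since $\gamma^2 = g^{j\bar k}(\partial_j r)(\overline{\partial_k r})$ is a smooth linear combination of $R_2$-type functions, and the definition of $Z_j$ permits $R_1$-factors to be replaced by $\gamma$, the same identity holds with $\gamma^k(y)(f - Bf)$ on the left. Taking $L^p$ norms, the composition sum is handled by Proposition \ref{mapping} $i)$ with source $L^2$: the hypothesis $1/p > 1/2 - k/(2n+2)$ is precisely the mapping condition for total-degree-$k$ compositions to send $L^2(D)$ into $L^p(D)$, so that
\begin{equation*}
\Big\| \sum_{j,i} Z_{i_1}\circ\cdots\circ Z_{i_j}(f - Bf) \Big\|_{L^p} \lesssim \|f - Bf\|_{L^2} \le 2\|f\|_{L^2} \lesssim \|f\|_{L^p},
\end{equation*}
where the final inequality uses $p \ge 2$ and boundedness of $D$.

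The $\mathbf{S}_\infty N \mdbar f$ term and, more importantly, the $Z_1 \mdbar f$ term are the main obstacles, because $\mdbar f$ is not a priori controlled by $\|f\|_{L^p}$. I would dispose of them by integrating by parts in the underlying integral representations, moving the $\mdbar$ off $f$ onto the kernel and using $\mdbar f = \mdbar(f - Bf)$: the $Z_1 \mdbar f$ contribution becomes a combination of $A_0$-type operators applied to $f - Bf$, bounded $L^p \to L^p$ by Theorem \ref{a0a2}, plus $\mathbf{S}_\infty$-smoothing terms; similarly, the $\mathbf{S}_\infty N \mdbar f$ term reduces to a smoothing operator applied directly to $f$ or to $Nf$, controlled via the $L^2$-boundedness of $N$ and $\|f\|_{L^2} \lesssim \|f\|_{L^p}$. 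All these manipulations require enough regularity to justify the integrations by parts, so the argument is first carried out for $f \in C^1(\overline{D})$ and then extended to general $f \in L^p$ by density.
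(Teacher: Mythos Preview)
Your treatment of the composition sum via Proposition \ref{mapping} and of $\mathbf{S}_{\infty}N\mdbar f$ via $\|N\mdbar f\|_{L^2}\lesssim\|f\|_{L^2}$ is correct and matches the paper. The gap is in your handling of the $Z_1\mdbar f$ term.

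First, the integration-by-parts step is not justified. Writing $Z_1\mdbar f=Z_1\mdbar(f-Bf)$ and moving $\mdbar$ onto the kernel produces a boundary integral over $\partial D$ involving $(f-Bf)|_{\partial D}$; but $Bf$ is only known to be holomorphic and in $L^2(D)$, and on a Henkin-Leiterer domain you have no a priori boundary control of $Bf$. Second, even granting that the interior contribution is an $A_0$-type operator applied to $f-Bf$, Theorem \ref{a0a2} only gives $\|A_0(f-Bf)\|_{L^p}\lesssim\|f-Bf\|_{L^p}$. You do not know $\|f-Bf\|_{L^p}<\infty$, let alone $\lesssim\|f\|_{L^p}$: that is essentially the unweighted version of what you are trying to prove. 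So the argument is circular at this point, and the density extension at the end does not repair it.

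The paper avoids this entirely. Instead of estimating $R_k(f-Bf)$, it multiplies the basic representation (\ref{basiceqn0}) for $f$ by $R_k$ and subtracts (\ref{gkb}); in the difference the $Z_1\mdbar f$ contributions cancel, yielding the identity
\[
R_k Bf = A_0 f + \mathbf{S}_{\infty}N\mdbar f + \sum_{j=1}^{k}\sum_{i_1+\cdots+i_j=k} Z_{i_1}\circ\cdots\circ Z_{i_j}\, f + \sum_{j=1}^{k}\sum_{i_1+\cdots+i_j=k} Z_{i_1}\circ\cdots\circ Z_{i_j}\, Bf.
\]
Here $\mdbar f$ appears only through $N\mdbar f$, and the $Z$-compositions hit $f$ and $Bf$ separately. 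Since these compositions map $L^2\to L^p$ and $\|Bf\|_{L^2}\le\|f\|_{L^2}$, there is no circularity.
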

\begin{proof}
We start by multiplying (\ref{basic0}) by $R_k$ and subtracting
(\ref{gkb}):
\begin{align}
\label{last}
 R_k Bf=&A_0 f+{\mathbf S}_{\infty} N\mdbar f\\
 \nonumber
 &+
\sum_{j=1}^k\sum_{i_1+\cdots+i_j=k}
 Z_{i_1}\circ\cdots\circ Z_{i_j}
f  + \sum_{j=1}^k\sum_{i_1+\cdots+i_j=k}
 Z_{i_1}\circ\cdots\circ Z_{i_j}
Bf.
\end{align}

The $L^p$ norm of the first term on the right hand side is bounded
by $\|f\|_p$ by Theorem \ref{a0a2} $ii)$.

We use the fact that
\begin{equation*}
{\mathbf S}_{\infty}:L^2\rightarrow L^p
\end{equation*}
for any $p\ge2$ and that
\begin{equation*}
\|N\mdbar f\|_{L^2}\lesssim \|f\|_{L^2}\lesssim \|f\|_{L^p}
\end{equation*}
(see \cite{Hor65}) to handle the second term.

For the last two sums in (\ref{last}) we use Proposition
\ref{mapping} $i)$ to show
\begin{equation*}
Z_{i_1}\circ\cdots\circ Z_{i_j}:L^2\rightarrow L^{p},
\end{equation*}
where $p$ is given by
\begin{equation}
\label{rhs} \frac{1}{p}>\frac{1}{2}-\frac{k}{2n+2}.
\end{equation}

Hence, the $L^p$ norm of the last two terms in (\ref{last}) is
bounded by
\begin{equation*}
\|f\|_{L^2}+\|Bf\|_{L^2}\lesssim \|f\|_{L^2} \lesssim \|f\|_{L^p}
\end{equation*}
which we use to finish the proof.
\end{proof}
Note that the right hand side of (\ref{rhs}) is 0 for $k(p)=n+1$.
Thus, we have the
\begin{cor}
For $2\le p<\infty$ we have the weighted estimates
\begin{equation*}
\|Bf\|_{L^{p,(n+1)p}}\lesssim \|f\|_{L^p}.
\end{equation*}
\end{cor}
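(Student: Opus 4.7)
The proof is essentially a direct specialization of Theorem \ref{bergmanlp}. The plan is to observe that the threshold condition $\frac{1}{p} > \frac{1}{2} - \frac{k}{2n+2}$ governing the admissible pairs $(p,k)$ collapses to a trivial inequality at the uniform choice $k = n+1$.

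More precisely, I would plug $k = n+1$ into the quantity $\frac{k}{2n+2}$ and observe that it equals $\frac{1}{2}$, so the right-hand side of the defining inequality (\ref{rhs}) is exactly $0$. The condition then reduces to $\frac{1}{p} > 0$, which is satisfied by every $p \in [2,\infty)$. Thus $k(p) = n+1$ is an admissible exponent for the entire range of $p$ covered by the corollary, and one may apply Theorem \ref{bergmanlp} with this single uniform choice.

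Applying that theorem with $k = n+1$ yields
\begin{equation*}
\|Bf\|_{L^{p,(n+1)p}} \lesssim \|f\|_{L^p}
\end{equation*}
for all $2 \le p < \infty$, which is precisely the claim. There is no real obstacle here since Theorem \ref{bergmanlp} already carries all the analytic content; the corollary is a bookkeeping step that extracts a single weight exponent sufficing for every finite $p$. The only thing worth double-checking is that the inequality in (\ref{rhs}) is strict but $\frac{1}{p} > 0$ remains strict on $[2,\infty)$, which is automatic.
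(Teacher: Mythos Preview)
Your proposal is correct and follows exactly the paper's own reasoning: the paper simply notes that the right-hand side of (\ref{rhs}) vanishes when $k = n+1$, so the hypothesis of Theorem \ref{bergmanlp} is satisfied for every $p\in[2,\infty)$, and the corollary follows immediately.
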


For the $C^k$-estimates we take $k=n+2$ in (\ref{gkb}) and use
 Proposition \ref{mapping} $i)$
above to establish
\begin{equation}
\label{k=0}
 \| \gamma^{n+2}(f-Bf)\|_{L^{\infty}} \lesssim
 \| \mdbar f\|_{L^{\infty}} +\|f\|_2.
 \end{equation}

   We use the notation $D^k$ to denote a $k$-th order differential
operator, which is a sum of terms which are composites of $k$
 vector fields.
We define
\begin{equation*}
Q_k(f)=\sum_{j=0}^k\|\gamma^jD^j\mdbar f\|_{\infty}+\|f\|_2.
\end{equation*}

    $T^k$ will be used for  a $k$-th order tangential differential
operator, which is a sum of terms which are composites of $k$
 tangential vector fields.
\begin{lemma}
\label{tanglemma}
 Let $T^k$ be a tangential operator of order
$k$. Let $\epsilon>0$, then
\begin{equation*}
\|\gamma^{(n+2)+\epsilon+k}T^k(f-Bf)\|_{L^{\infty}}\lesssim
Q_k(f).
\end{equation*}
\end{lemma}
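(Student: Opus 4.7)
I proceed by induction on $k$. The base case $k=0$ is the estimate (\ref{k=0}), since $\gamma\le C$ makes $\gamma^{n+2+\epsilon}$ no stronger than $\gamma^{n+2}$, while $Q_0(f)=\|\bar\partial f\|_\infty+\|f\|_2$ coincides with the right-hand side of (\ref{k=0}).

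For the inductive step, assume the estimate holds for every tangential operator of order $\le k-1$. Write $T^k=X\,T^{k-1}$ with $X$ a first-order tangential vector field and apply $T^k$ in the $y$-variable to (\ref{gkb}) taken with $k$ replaced by $n+2$,
\begin{equation*}
R_{n+2}(y)(f-Bf)=Z_1\mdbar f+{\mathbf S}_{\infty} N\mdbar f+\sum_{j=1}^{n+2}\sum_{i_1+\cdots+i_j=n+2}Z_{i_1}\circ\cdots\circ Z_{i_j}(f-Bf).
\end{equation*}
On the left-hand side, Leibniz yields the principal term $R_{n+2}\,T^k(f-Bf)$ together with terms $T^a(R_{n+2})\cdot T^b(f-Bf)$ with $a+b=k$ and $b<k$; these latter terms are controlled by the induction hypothesis, since $T^a(R_{n+2})$ is another $R$-type product modulo $\lre_{0,-a}$ coefficients that are absorbed by the extra weight $\gamma^{k+\epsilon}$. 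Multiplying through by $\gamma^{n+2+\epsilon+k}$ and dividing by a favorably chosen $R_{n+2}$ (as in the pointwise argument used to establish (\ref{k=0})) reduces the lemma to estimating the right-hand side.

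On the right-hand side, each of the $k$ tangential derivatives is pushed past the outer $Z$-operators using Theorem \ref{commutator}: the identity
\begin{equation*}
\gamma^{\ast} X^y Z_1=-Z_1\tilde X^x\gamma+Z_1^{(0)}+\sum_{\nu=1}^l Z_1^{(\nu)}W_\nu^x\gamma
\end{equation*}
shows that each commutation costs one factor of $\gamma^*(y)$ on the left, transfers the derivative onto the $x$-variable, and regenerates $Z_1$-type operators. Iterating this $k$ times produces exactly the $\gamma^{n+2+k}$ weight demanded by the statement. After integration by parts, derivatives that ultimately land on $\mdbar f$ yield $\gamma^l D^l \mdbar f$ for $l\le k$, bounded by $Q_k(f)$; the smoothing piece ${\mathbf S}_\infty N\mdbar f$ is handled by the intrinsic property $\|(\gamma T)^k{\mathbf S}_\infty g\|_\infty\lesssim \|g\|_2$ combined with the standard bound $\|N\mdbar f\|_{L^2}\lesssim\|f\|_{L^2}$. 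For the compositions $Z_{i_1}\circ\cdots\circ Z_{i_j}$, derivatives that reach the innermost $(f-Bf)$ produce $T^l(f-Bf)$ with $l<k$, and the surrounding composition is estimated by combining the inductive hypothesis with parts $(ii)$ and $(iii)$ of Proposition \ref{mapping}.

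The main technical obstacle is the bookkeeping of $\gamma$-weights in a neighborhood of each critical point of $r$. There, as noted in the discussion preceding Theorem \ref{commutator}, smooth tangential vector fields have coefficients only in $\lre_{0,-1}$ (because of the $\gamma^{-1}$ factors that arise when expressing the orthonormal frame in the Morse coordinates (\ref{rcoor})), so each commutation produces a priori singular remainders that must be reabsorbed into the $\lre_{j,k}$-structure built into the definition of the $Z$-operators. The slack $\gamma^\epsilon$ in the weight is precisely what pays for the endpoint losses incurred in the $L^{\infty,\beta,\delta}$ mapping properties of Theorems \ref{dertype1} and \ref{E1properties} at each of these commutations.
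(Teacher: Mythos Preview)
Your outline follows the paper's, but there is a genuine gap in the inductive step. When you push the tangential derivatives through a composition $Z_{i_1}\circ\cdots\circ Z_{i_j}$ by iterating Theorem~\ref{commutator}, the principal branch carries \emph{all} $k$ derivatives to the inside, producing a term of the form $(Z'_{i_1}\circ\cdots\circ Z'_{i_j})\bigl(\gamma^k\tilde T^k(f-Bf)\bigr)$. Your assertion that only $T^l(f-Bf)$ with $l<k$ reaches the inside is unjustified and, for this principal term, false. Attempting to estimate it using Proposition~\ref{mapping}$(iii)$ alone yields
\[
\bigl\|\gamma^{\epsilon'}(Z'_{i_1}\circ\cdots\circ Z'_{i_j})\bigl(\gamma^k\tilde T^k(f-Bf)\bigr)\bigr\|_{L^\infty}
\lesssim
\bigl\|\gamma^{n+2+\epsilon+k}\tilde T^k(f-Bf)\bigr\|_{L^\infty},
\]
which is precisely the quantity you are trying to bound; the induction does not close.

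The paper's resolution is to commute only $k-1$ of the factors $\gamma T$ to the inside and deliberately keep one $\gamma T$ \emph{outside} the whole $Z$-composition; this is the last line of (\ref{induct}). Proposition~\ref{mapping}$(ii)$ is exactly the statement that
\[
\gamma T\sum_{j}\sum_{i_1+\cdots+i_j=n+2} Z_{i_1}\circ\cdots\circ Z_{i_j}:L^{\infty,n+2+\epsilon,0}(D)\to L^{\infty,\epsilon',0}(D),
\]
so that this top-order piece is controlled by $\|\gamma^{(n+2)+\epsilon'+(k-1)}T^{k-1}(f-Bf)\|_{L^\infty}$, i.e.\ by the inductive hypothesis at level $k-1$. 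You cite part $(ii)$ but never explain its role; without this specific organization the loop stays open.

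A smaller point: ``dividing by a favorably chosen $R_{n+2}$'' is not quite right, since every individual $R_{n+2}$ vanishes at the critical points. What is actually used (already implicitly in (\ref{k=0})) is that $\gamma^{n+2}$ is pointwise comparable to $\max|R_{n+2}|$ over a finite family of choices of the vector fields; one sums the resulting estimates rather than dividing.
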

\begin{proof}
The proof is by induction.  The first step, $k=0$ is contained in
(\ref{k=0}).

We start with (\ref{gkb}) with $k=n+2$:
\begin{equation*}
R_{n+2}(y) (f-Bf)=Z_1\mdbar f+{\mathbf S}_{\infty} N\mdbar f+
\sum_{j=1}^{n+1}\sum_{i_1+\cdots+i_j=n+2}
 Z_{r_{i_1}}\circ\cdots\circ Z_{r_{i_j}}
(f-Bf)
\end{equation*}
and apply $\gamma^{\epsilon}\gamma^k T^k$ to both sides.  After
employing the commutator relations in Theorem \ref{commutator} and
the induction step, we have
\begin{align}
 \label{induct}
\gamma^{\epsilon}R_{n+2}\gamma^{k}T^k(f-Bf)
 =&\gamma^{\epsilon}\sum_{j=0}^k(Z_1\gamma^jT^j)(\mdbar f)+
\gamma^{\epsilon}\lrs_{\infty}N\mdbar f\\
 \nonumber
 &+\gamma^{\epsilon}\sum_{m=1}^{k-1}\sum_{j=1}^{n+2}\sum_{i_1+\cdots+i_j=n+2}
 Z_{r_{i_1}}\circ\cdots\circ Z_{r_{i_j}}
\gamma^{m}T^{m}(f-Bf)\\
\nonumber
 &+
 \gamma^{\epsilon}\gamma T \sum_{j=1}^{n+2}\sum_{i_1+\cdots+i_j=n+2}
 Z_{r_{i_1}}\circ\cdots\circ Z_{r_{i_j}}
\gamma^{k-1}T^{k-1}(f-Bf).
\end{align}
The $L^{\infty}$ norms of the first three terms on the right of
(\ref{induct}) are bounded by $Q_k(f)$ by Proposition
\ref{mapping} $i)$ and the induction step.
 For the last term we use Proposition \ref{mapping} $ii)$ to show its
 $L^{\infty}$ norm is bounded by
\begin{equation*}
\|\gamma^{(n+2)+\epsilon'+k-1}T^{k-1}(f-Bf)\|_{\infty}\lesssim
Q_{k-1}(f),
\end{equation*}
for $\epsilon'<\epsilon$, by the induction hypothesis.
\end{proof}

In order to generalize Lemma \ref{tanglemma} to include
non-tangential operators, we express a normal derivative of a
component
 of a function, $f$, in terms of
tangential operators acting on $f$ and components of $\mdbar f$,

 We have the decomposition in the following form:
\begin{equation}
 \label{normaldecomp}
\gamma N f=\sum_{j}a_{j}\gamma T_jf +
    \sum_j b_{j}f+ \sum_j c_{j}\gamma(\mdbar f)_j
,
\end{equation}
where the coefficients $a_{j}$, $b_{j}$, and $c_{j}$ are all of
the form $\lre_{0,0}$.  The decomposition is well known in the
smooth case (see \cite{LiMi}) and to verify (\ref{normaldecomp})
in a neighborhood of $\gamma=0$, one may use the coordinates
$u_{j_1},\ldots,u_{j_m},v_{j_{m+1}},\ldots,v_{j_{2n}}$ as in
(\ref{rcoor}) above.

It is then straightforward how to generalize Lemma
\ref{tanglemma}.   Suppose $D^k$ is a $k^{th}$ order differential
operator which contains the normal field at least once.   In
$\gamma^kD^k$ we commute $\gamma N$ with terms of the form $\gamma
T$, where $T$ is tangential, and we consider the operator $D^k=
D^{k-1}\circ \gamma N$, where $D^{k-1}$ is of order $k-1$.  The
error terms due to the commutation involve differential operators
of order $\le k-1$.  From (\ref{normaldecomp}) we just have to
consider $D^{k-1}\gamma T f$ and $D^{k-1}\mdbar f$.  The last two
terms are bounded by $Q_{k-1}(f)$, and we repeat the process with
$D^{k-1}\gamma T f$, until we are left with $k$ tangential
operators for which we can apply Lemma \ref{tanglemma}.  We
thereby obtain the weighted $C^k$ estimates given in Theorem
\ref{ckintro}.
\begin{thrm}  For $\epsilon>0$
\begin{equation*}
\|\gamma^{(n+2)+\epsilon+k} (f-Bf)\|_{C^{k}}\lesssim Q_k(f).
\end{equation*}
\end{thrm}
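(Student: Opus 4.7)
The plan is to reduce the full $C^k$ bound to the tangential estimate of Lemma \ref{tanglemma} by eliminating normal derivatives one at a time via the decomposition (\ref{normaldecomp}). Since the $C^k$ norm is the supremum over all compositions of $k$ vector fields, it suffices to prove $\|\gamma^{(n+2)+\epsilon+k} D^k(f-Bf)\|_{L^\infty}\lesssim Q_k(f)$ for an arbitrary $k$th order differential operator $D^k$. I would set up a double induction: the outer induction is on $k$, with base case $k=0$ supplied by (\ref{k=0}); for each fixed $k$ there is an inner induction on $\nu$, the number of normal-field factors appearing in $D^k$. The inner base case $\nu=0$ is precisely Lemma \ref{tanglemma}.

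For the inner inductive step, I commute one occurrence of $\gamma N$ to the right so as to write $\gamma^k D^k = D^{k-1}\circ(\gamma N)$ modulo commutator terms. Since $\gamma N$ commutes with $\gamma T$ (for $T$ tangential) up to operators of strictly lower order with coefficients in $\lre_{0,0}$, the commutator contributions are estimated against $Q_{k-1}(f)$ by the outer induction hypothesis. Applied to $f-Bf$, and using $\mdbar(f-Bf)=\mdbar f$, the decomposition (\ref{normaldecomp}) replaces the rightmost $\gamma N(f-Bf)$ by three kinds of summands with smooth coefficients: (i) $\gamma T(f-Bf)$ for various tangential $T$; (ii) $(f-Bf)$; and (iii) $\gamma(\mdbar f)_j$. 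Pulling $D^{k-1}$ past these and Leibniz-expanding gives terms of type (i) in which the inner operator has $\nu-1$ normal factors and the inner induction closes, terms of type (ii) bounded by the outer induction, and terms of type (iii) directly estimated by $Q_k(f)$, since $k-1$ derivatives of $\gamma(\mdbar f)_j$ with weight $\gamma^{(n+2)+\epsilon+k}$ reduce, after absorbing $\gamma$ into the weight and using $\gamma\lesssim 1$, to the norms $\|\gamma^j D^j\mdbar f\|_\infty$ for $j\le k$.

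The main obstacle, exactly as flagged in the discussion preceding Theorem \ref{commutator}, is that near a Morse critical point of $r$ the coefficients $a_j,b_j,c_j$ in (\ref{normaldecomp}) are only of class $\lre_{0,0}$: each derivative that lands on one of them costs a factor of $\gamma^{-1}$, and similarly for the $\gamma$ factors made explicit in the decomposition. The bookkeeping built into the definition of $\lre_{j,k}$ and the convention that smooth tangential vector fields already carry a $\gamma$ ensures that every such $\gamma^{-1}$ is matched by a $\gamma$ drawn from the weight $\gamma^{(n+2)+\epsilon+k}$ on the left, and the small losses $\epsilon\to\epsilon'$ that arise in applying Proposition \ref{mapping} at each step can be absorbed into the single $\epsilon$ of the final estimate. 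Keeping track of this accounting through all Leibniz expansions is the delicate point.

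Once these observations are in place, the iteration terminates after at most $k$ rounds, leaving a purely tangential operator of order $k$ acting on $f-Bf$, to which Lemma \ref{tanglemma} applies and produces the bound $Q_k(f)$. Summing over the finitely many summands generated along the way yields the claimed estimate $\|\gamma^{(n+2)+\epsilon+k}(f-Bf)\|_{C^k}\lesssim Q_k(f)$, and hence Theorem \ref{ckintro}.
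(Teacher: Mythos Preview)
Your proposal is correct and follows essentially the same route as the paper: commute a copy of $\gamma N$ to the rightmost position (the commutator errors are lower order), replace $\gamma N(f-Bf)$ via the decomposition (\ref{normaldecomp}) into tangential, zeroth-order, and $\mdbar f$ pieces, handle the latter two by the outer induction and the definition of $Q_k$, and iterate on the remaining tangential piece until Lemma \ref{tanglemma} applies. Your double induction on $(k,\nu)$ merely makes explicit the iteration the paper sketches, and your remarks about the $\lre_{0,0}$ coefficients and the matching of $\gamma^{-1}$ losses against the weight are exactly the bookkeeping the paper alludes to.
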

As an immediate consequence we obtain weighted $C^k$ estimates for
the canonical solution, the solution of minimal $L^2$ norm,
 to the $\mdbar$-equation.
 Let $v$ be any solution to $\mdbar v=f$ with $L^2$ estimates, the
 existence being guaranteed by H\"{o}rmander's solution \cite{Hor65}.
Then $u=v-Bv$ is the canonical solution to $\mdbar u=f$, and from
above we obtain the following estimates.
\begin{cor}
The canonical solution to $\mdbar u=f$ satisfies
\begin{equation*}
\|\gamma^{(n+2)+\epsilon+k} u\|_{C^{k}}\lesssim
 \| \gamma^{k} f\|_{C^{k}} + \|f\|_2.
\end{equation*}
\end{cor}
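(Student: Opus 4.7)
The plan is to run the recipe sketched in the two sentences preceding the corollary. First, I would invoke H\"ormander's $L^2$-existence theorem \cite{Hor65} on the strictly pseudoconvex domain $D$ to produce $v\in L^2(D)$ with $\mdbar v = f$ and $\|v\|_{L^2}\lesssim \|f\|_{L^2}$; for non-smooth $D$ this is obtained by applying the theorem on the smooth approximants $D_\epsilon$ and passing to a weak $L^2$-limit, the same device used throughout the paper. I then set $u:=v-Bv$. Because $Bv$ is holomorphic we have $\mdbar u = \mdbar v = f$, while the orthogonality of $u=(I-B)v$ to the Bergman space is built into the definition of $B$ and forces $u$ to have the minimum $L^2$ norm among all $L^2$-solutions; thus $u$ is the canonical solution.

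Next I would apply the theorem immediately preceding the corollary, with $v$ in place of $f$, to conclude
$$\|\gamma^{(n+2)+\epsilon+k}u\|_{C^k}=\|\gamma^{(n+2)+\epsilon+k}(v-Bv)\|_{C^k}\lesssim Q_k(v)=\sum_{j=0}^{k}\|\gamma^j D^j f\|_\infty+\|v\|_{L^2},$$
using $\mdbar v = f$ in the definition of $Q_k$. The $L^2$ term is controlled by $\|f\|_{L^2}$ by the previous step, and the weighted sup-norm sum is absorbed into $\|\gamma^k f\|_{C^k}$ via a routine Leibniz expansion of $D^j(\gamma^k f)$ combined with the size estimate $|D^i\gamma^k|\lesssim \gamma^{k-i}$, which is precisely the identification implicitly used in passing between the intermediate theorem and Theorem \ref{ckintro}.

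There is no substantive obstacle here; the corollary is essentially a translation of the previous theorem from the $f-Bf$ formulation to the $\mdbar$-solution formulation. The only outside ingredient is H\"ormander's $L^2$-estimate, and the only minor point to check is that his theorem still yields a solution with the stated constant on the possibly non-smooth $D$, which is handled by the $D_\epsilon\to D$ approximation already exploited elsewhere in the paper.
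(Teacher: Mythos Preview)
Your proposal is correct and follows exactly the paper's own argument: invoke H\"ormander for an $L^2$-solution $v$, set $u=v-Bv$, apply the preceding theorem with $v$ in place of $f$, and then identify $Q_k(v)$ with $\|\gamma^k f\|_{C^k}+\|f\|_{L^2}$ just as the paper does when passing from the final theorem to Theorem~\ref{ckintro}. There is nothing to add.
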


\bibliographystyle{amsplain}

\end{document}